\definecolor{verdeoscuro}{rgb}{0.0, 0.5, 0.0}
\begin{document}

\title{Enhancing sharp augmented Lagrangian methods with smoothing techniques for nonlinear programming}

% OTRAS OPCIONES
% A smoothing--based approach to nonlinear programming with sharp augmented Lagrangians
% Novel smoothing techniques for sharp augmented Lagrangians: a comparative study with PHR methods

\author{José Luis Romero \and Damián Fernández \and Germán Ariel Torres}

\institute{
José Luis Romero \and Damián Fernández \at
Facultad de Matemática, Astronomía, Física y Computación \\
Universidad Nacional de Córdoba (CIEM--CONICET) \\
Av. Medina Allende s/n, Ciudad Universitaria, CP X5000HUA, Córdoba, Argentina \\
e-mail: joseluisromero@unc.edu.ar
\and Damián Fernández \at 
e-mail: dfernandez@unc.edu.ar 
\and  
Germán Ariel Torres, Corresponding author \at
Facultad de Ciencias Exactas y Naturales y Agrimensura \\
Universidad Nacional del Nordeste (IMIT--CONICET) \\
Av. Libertad 5470, Corrientes, CP 3400, Corrientes, Argentina \\
e-mail: german.torres@comunidad.unne.edu.ar
}

\date{Received: date / Accepted: date}
%The correct dates will be entered by the editor.

\maketitle

\begin{abstract}
This paper proposes a novel approach to solving nonlinear programming problems using a sharp augmented Lagrangian method with a smoothing technique. Traditional sharp augmented Lagrangian methods are known for their effectiveness but are often hindered by the need for global minimization of nonconvex, nondifferentiable functions at each iteration. To address this challenge, we introduce a smoothing function that approximates the sharp augmented Lagrangian, enabling the use of primal minimization strategies similar to those in Powell--Hestenes--Rockafellar (PHR) methods. Our approach retains the theoretical rigor of classical duality schemes while allowing for the use of stationary points in the primal optimization process. We present two algorithms based on this method--one utilizing standard descent and the other employing coordinate descent. Numerical experiments demonstrate that our smoothing--based method compares favorably with the PHR augmented Lagrangian approach, offering both robustness and practical efficiency. The proposed method is particularly advantageous in scenarios where exact minimization is computationally infeasible, providing a balance between theoretical precision and computational tractability.
\end{abstract}
\keywords{Augmented Sharp Lagrangian \and Continuous Optimization \and Nonlinear Programming}
\subclass{49J53 \and 49K99 \and 90C30}
% 49: Calculus of variations and optimal control; optimization
% 49J53: Set-valued and variational analysis
% 49K99: None of the above, but in this section
% 90: Operations research, mathematical programming
% 90C30: Nonlinear programming

\section{Introduction}
We want to solve the following nonlinear programming problem:
\begin{equation} \label{eq:nlp}
\begin{array}{cl}
\textrm{minimize} & f \left( x \right) \\[2mm]
\textrm{subject to}& h \left( x \right) = 0 ,
\end{array}
\end{equation}
where $f : \mathbbm{R}^n \to \mathbbm{R}$ and $h : \mathbbm{R}^n \to \mathbbm{R}^m$ are twice continuously differentiable. Stationary points $x$ of \eqref{eq:nlp} along with the associated Lagrange multipliers $\lambda$, are characterized by the following system of equations
\begin{equation} \label{eq:lagrange}
\nabla_x L \left( x , \lambda \right) = 0 , \qquad h \left( x \right) = 0 ,
\end{equation}
where $L: \mathbbm{R}^n \times \mathbbm{R}^m \to \mathbbm{R}$ such that $L \left( x , \lambda \right) = f \left( x \right) + \left\langle \lambda , h \left( x \right) \right\rangle$, is the  Lagrangian function associated with problem \eqref{eq:nlp}. 

Augmented Lagrangian methods have been extensively used to solve nonlinear programming problems. These methods involve the iterative minimization of an augmented Lagrangian function with respect to the primal variables, followed by appropriate updates to the dual variables. Among the various methods, the most studied are the so--called Powell--Hestenes--Rockafellar (PHR) augmented Lagrangian methods \cite{hestenes1969multiplier,powell1969method,rockafellar1974augmented}, which are based on the function $\bar{L}_2 : \mathbbm{R}^n \times \mathbbm{R}^m \times \left( 0 , \infty \right) \to \mathbbm{R}$ such that 
\begin{equation} \label{phrAugLag}
\bar{L}_2 \left( x ; \lambda , r \right) = f \left( x \right) + \left\langle \lambda , h \left( x \right) \right\rangle + \frac{r}{2} \left\| h \left( x \right) \right\|^2 .
\end{equation}
This method has been studied extensively in the literature \cite{bertsekas1982constrained,conn1991globally,conn1996convergence,pennanen2002local,birgin2005numerical,andreani2008augmented,fernandez2012local,birgin2014practical}, and implemented successfully in software packages such as LANCELOT \cite{conn1992lancelot} and ALGENCAN \cite{andreani2007augmented}. We emphasize that at each iteration, the function to be minimized, $x \mapsto \bar{L}_2 \left( x ; \lambda , r \right)$, is continuously differentiable. Furthermore, convergence results can be obtained even if, instead of a global minimizer, an approximate stationary point is found at each iteration.

Another group of methods is based on the sharp augmented Lagrangian function $\bar{L}_1 : \mathbbm{R} \times \mathbbm{R}^m \times \left( 0 , \infty \right) \to \mathbbm{R}$ defined as follows:
\begin{equation} \label{sAugLag}
\bar{L}_1 \left( x ; \lambda , r \right) = f \left( x \right) + \left\langle \lambda , h \left( x \right) \right\rangle + r \left\| h \left( x \right) \right\| .
\end{equation}
Methods based on this function have been studied in \cite{gasimov2002augmented,burachik2006modified,jefferson2009thesis,burachik2010primal,kasimbeyli2009modified,bagirov2019sharp}. These studies propose a duality scheme that preserves the main idea of the modified subgradient algorithm, where at each iteration, a function of the dual variables is obtained by globally minimizing $x \mapsto \bar{L}_1 \left( x ; \lambda , r \right)$, and the dual variables are updated in the direction of a subgradient of the dual function. A practical drawback of these methods is that at each iteration, one must find a global minimizer, or a good approximation, of a nonlinear,  nonconvex, and nondifferentiable function. 

In this work, we propose a method based on the sharp augmented Lagrangian with a primal approach similar to that used in PHR augmented Lagrangian methods. To perform the primal minimization, we shall use a suitable smoothing technique. Among all possible approaches to smoothing out the kinks of the sharp augmented Lagrangian function, we choose the one introduced in \cite{fernandez2022augmented}. Specifically, it was shown that
\[
\bar{L}_1 \left( x ; \lambda , r \right) = \inf_{t > 0} \left\{ \textstyle \bar{L}_2 \left( x ; \lambda , \frac{r}{t} \right) + \frac{r}{2} t \right\}.
\]
This relation establishes the sharp augmented Lagrangian as a scalarization of the penalty parameter in the PHR augmented Lagrangian \eqref{phrAugLag}. By adopting this approach, we aim to inherit some of the desirable properties of the PHR augmented Lagrangian.

For parameters $\left( \lambda , r \right) \in  \mathbbm{R}^m \times \left( 0 , \infty \right)$, we define the extended--real--valued smoothing function as follows:
\[
\tilde{L}_{\lambda , r} \left( x , t \right) = \left\{ \begin{array}{ll}
f \left( x \right) + \left\langle \lambda , h \left( x \right) \right\rangle + \frac{r}{2 t} \left\| h \left( x \right) \right\|^2 + \frac{r}{2} t , & \quad t > 0 , \\[2mm]
f \left( x \right) , & \quad t = 0 , \, h \left( x \right) = 0 , \\[2mm]
\infty , & \quad \textrm{otherwise}.
\end{array} \right.
\]
It can be observed that $\tilde{L}_{\lambda , r}$ is a lower semicontinuous function on $\mathbbm{R}^n \times \mathbbm{R}$ and is continuously differentiable on $\mathbbm{R}^n  \times \left( 0 , \infty \right)$. Clearly, for this function we have $\bar{L}_1 \left( x ; \lambda , r \right) = \min_{t \geq 0} \left\{ \tilde{L}_{\lambda , r} \left( x , t \right) \right\}$.

The goal is to modify the sharp augmented Lagrangian method by replacing the minimization of the function $x \mapsto \bar{L}_1 \left( x ; \lambda , r \right)$ with the minimization of the function $\left( x , t \right) \mapsto \tilde{L}_{\lambda , r} \left( x , t \right)$. The nondifferentiability of $\bar{L}_1$ at $x$ where $h \left( x \right) = 0$ is addressed by introducing a singularity in $\tilde{L}_{\lambda , r}$ at $t = 0$. We will demonstrate that employing this smoothing technique does not compromise the classical duality scheme, which requires exact minimizers at each step. Additionally, we will present a primal approach where stationary points at each step are acceptable.

The rest of the paper is organized as follows. In Section \ref{sec:exact}, we examine the classical duality approach based on exact minimization for the smoothing function. Section \ref{sec:inexact} introduces the primal approach using stationary points, detailing two algorithms: one employing standard descent and the other using coordinate descent. Section \ref{boundingpenaltyparameter} studies the boundedness of the penaly parameter. Section \ref{sec:numerics} provides a comparison between the PHR augmented Lagrangian method and our two primal algorithms. Section \ref{sec:conclusions} is dedicated to conclusiones, and finally, an Appendix is included, describing a set of test problems chosen by the authors.

We conclude this section by defining our notation. We use $\langle \cdot , \cdot \rangle$ to denote the Euclidean inner product and $\| \cdot \|$ to represent the associated norm.

\section{Exact algorithm} \label{sec:exact}
We will prove that finding the global minimizer of $\tilde{L}_{\lambda , r}$ instead of the global minimizer of $\bar{L}_1 \left( \cdot ; \lambda , r \right)$ allows us to recover the main results from \cite[Chapter 2]{jefferson2009thesis}. To achieve this, we propose a method using a dual approach based on a modified subgradient algorithm. To this end, we refer to \eqref{eq:nlp} as the primal problem and define its associated augmented dual problem:
\[
\mathop{\textrm{maximize}}_{\left( \lambda , r \right) \in \mathbbm{R}^m \times \left( 0 , \infty \right)} \tilde{q} \left( \lambda , r \right) ,
\]
where
\[
\tilde{q} \left( \lambda , r \right) = \inf_{\left( x , t \right) \in \mathbbm{R}^n \times \mathbbm{R}} \tilde{L}_{\lambda , r} \left( x , t \right).
\]
Let us denote the set of minimizers of $\tilde{L}_{\lambda , r}$ by $A$, that is,
\[
A \left( \lambda , r \right) = \left\{ \left( x , t \right) \in \mathbbm{R}^n \times \mathbbm{R} \ \middle| \ \tilde{L}_{\lambda , r} \left( x , t \right) = \tilde{q} \left( \lambda , r \right) \right\}.
\]

For the exact case, we will use Algorithm \ref{SLModified}, which follows the classical dual approach based on the modified subgradient algorithm.

\

\begin{algorithm}[H]
\caption{Exact sharp Lagrangian (modified subgradient alg.)} \label{SLModified}
\begin{algorithmic}
\vspace{2mm}
\State \textbf{Step 0:} {\it Initialization} 
   
\vspace{1mm}
\noindent Choose $\left( \lambda^0 , r_0 \right) \in \mathbbm{R}^m \times \left( 0 , \infty \right)$ and a sequence of exogenous parameters $\left\{ \alpha_k \right\} \subset \left( 0 , \infty \right)$.

\noindent Set $k:=0$.

\vspace{2mm}
\State \textbf{Step 1:} {\it Solving the $k$--th subproblem} 

\vspace{1mm}
\begin{enumerate}[label=(\alph*)]
\item Find $\left( x^{k + 1} , t_{k + 1} \right) \in A \left( \lambda^k , r_k \right)$.
\item If $t_{k + 1} = 0$, STOP.
\item If $t_{k + 1} \neq 0$, go to {\bf Step 2}.
\end{enumerate}

\vspace{2mm}
\State \textbf{Step 2:} {\it Updating dual variables}

\vspace{1mm}
\noindent Set
\begin{align*}
\lambda^{k + 1} & = \lambda^k + \frac{r_k}{t_{k + 1}} h \left( x^{k + 1} \right) , \\
r_{k + 1} & = 2 r_k .
\end{align*}

\noindent Set $k := k + 1$ and go to {\bf Step 1}.
\end{algorithmic}
\end{algorithm}
\vspace{2mm}

In the next result we show some properties of the elements within the solution set $A$. One of these properties reveals a relationship between the minimizers of $\tilde{L}_{\lambda , r}$ and the subgradients of $- \tilde{q}$. To establish this, note first that $- \tilde{q}$ is a convex function, as it is the supremum of affine functions. Therefore, its subdifferential is given by
\begin{align*}
\partial \left( - \tilde{q} \right) \left( \hat{\lambda} , \hat{r} \right) = \Big\{ \left( \xi , \sigma \right) \mid - \tilde{q} \left( \lambda , r \right) & \geq - \tilde{q} \left( \hat{\lambda} , \hat{r} \right) \\
& + \left. \left\langle \left( \xi , \sigma \right) , \left( \lambda , r \right) - \left( \hat{\lambda} , \hat{r} \right) \right\rangle , \ \forall \left( \lambda , r \right) \right\} .
\end{align*}

\begin{lemma} \label{lem:Aproper}
Let $\left( \hat{x} , \hat{t} \right) \in A \left( \hat{\lambda} , \hat{r} \right)$, then it holds that
\begin{enumerate}[label=(\alph*)]
\item \label{t=normh(x)} $\hat{t} = \left\| h \left( \hat{x} \right) \right\|$, and
\item \label{subgrad_q} $- \left( h \left( \hat{x} \right) , \hat{t} \right) \in \partial \left( - \tilde{q} \right) \left( \hat{\lambda} , \hat{r} \right)$.
\end{enumerate}
\end{lemma}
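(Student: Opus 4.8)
The plan is to work directly from the definition of $A(\hat\lambda,\hat r)$ and the structure of $\tilde L_{\hat\lambda,\hat r}$. Since $(\hat x,\hat t)$ minimizes $\tilde L_{\hat\lambda,\hat r}$ over all of $\mathbbm R^n\times\mathbbm R$, it in particular minimizes the partial function $t\mapsto \tilde L_{\hat\lambda,\hat r}(\hat x,t)$. For part \ref{t=normh(x)}, I would split into two cases according to whether $h(\hat x)=0$ or not. If $h(\hat x)=0$, then $\tilde L_{\hat\lambda,\hat r}(\hat x,t)=f(\hat x)+\frac{\hat r}{2}t$ for $t>0$ and equals $f(\hat x)$ at $t=0$, so the infimum over $t\ge 0$ is attained only at $t=0$; hence $\hat t=0=\|h(\hat x)\|$. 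If $h(\hat x)\neq 0$, then $\hat t$ must be positive (the value at any admissible $t$ is finite, while $t=0$ gives $\infty$), so we may minimize the smooth function $t\mapsto \frac{\hat r}{2\hat t}\|h(\hat x)\|^2+\frac{\hat r}{2}\hat t$ over $t>0$ by setting the derivative to zero: $-\frac{\hat r}{2t^2}\|h(\hat x)\|^2+\frac{\hat r}{2}=0$, which gives $\hat t=\|h(\hat x)\|$ (the positive root). A second-derivative or convexity check confirms this is the minimizer. In both cases $\hat t=\|h(\hat x)\|$, and as a byproduct $\tilde L_{\hat\lambda,\hat r}(\hat x,\hat t)=\bar L_1(\hat x;\hat\lambda,\hat r)$, recovering the identity $\bar L_1(x;\lambda,r)=\min_{t\ge 0}\tilde L_{\lambda,r}(x,t)$ at the optimum.

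For part \ref{subgrad_q}, I would use the affine-in-$(\lambda,r)$ structure of $\tilde L_{\lambda,r}(\hat x,\hat t)$. Observe that for the fixed pair $(\hat x,\hat t)$ we have, writing things out, $\tilde L_{\lambda,r}(\hat x,\hat t)=f(\hat x)+\langle \lambda,h(\hat x)\rangle+ r\big(\tfrac{1}{2\hat t}\|h(\hat x)\|^2+\tfrac{\hat t}{2}\big)$ when $\hat t>0$, and using $\hat t=\|h(\hat x)\|$ from part \ref{t=normh(x)} this collapses to $f(\hat x)+\langle\lambda,h(\hat x)\rangle+r\|h(\hat x)\|$, which is exactly $\bar L_1(\hat x;\lambda,r)$; when $\hat t=0$ (so $h(\hat x)=0$) it equals $f(\hat x)=\bar L_1(\hat x;\lambda,r)$ as well. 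In either case, as a function of $(\lambda,r)$ this is affine with gradient $(h(\hat x),\hat t)$. Now for any $(\lambda,r)$,
\[
-\tilde q(\lambda,r)\ge -\tilde L_{\lambda,r}(\hat x,\hat t)=-\big[\tilde L_{\hat\lambda,\hat r}(\hat x,\hat t)+\langle h(\hat x),\lambda-\hat\lambda\rangle+\hat t\,(r-\hat r)\big]=-\tilde q(\hat\lambda,\hat r)-\big\langle (h(\hat x),\hat t),(\lambda,r)-(\hat\lambda,\hat r)\big\rangle,
\]
where the first inequality is because $\tilde q(\lambda,r)=\inf_{(x,t)}\tilde L_{\lambda,r}(x,t)\le \tilde L_{\lambda,r}(\hat x,\hat t)$, and the last equality uses $\tilde L_{\hat\lambda,\hat r}(\hat x,\hat t)=\tilde q(\hat\lambda,\hat r)$ since $(\hat x,\hat t)\in A(\hat\lambda,\hat r)$. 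Comparing with the displayed formula for $\partial(-\tilde q)$ shows precisely that $-(h(\hat x),\hat t)\in\partial(-\tilde q)(\hat\lambda,\hat r)$.

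The only mildly delicate point is part \ref{t=normh(x)} in the degenerate case $h(\hat x)=0$: one must be careful that the ``otherwise'' branch (value $\infty$) forces $\hat t\ge 0$ and that on $t>0$ the function is strictly increasing in $t$, so that the minimum over the closed half-line is genuinely attained at the boundary point $t=0$ rather than approached asymptotically. Everything else is a direct computation and an application of the subgradient inequality, so I expect this case analysis — rather than any conceptual difficulty — to be the main thing requiring care. I would also note that part \ref{subgrad_q} does not actually require the explicit value of $\hat t$ beyond its appearing as the second coordinate of the subgradient; the affine structure alone delivers it, but invoking part \ref{t=normh(x)} makes the connection to $\bar L_1$ transparent.
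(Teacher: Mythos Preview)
Your proof is correct and follows essentially the same approach as the paper. The only cosmetic difference is that in part \ref{t=normh(x)} you split cases according to whether $h(\hat x)=0$ or not, whereas the paper splits according to whether $\hat t>0$ or $\hat t=0$; these are logically equivalent, and your part \ref{subgrad_q} is exactly the paper's computation, just phrased more succinctly via the affine-in-$(\lambda,r)$ structure.
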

\begin{proof}
\ref{t=normh(x)} If $\hat{t} > 0$, since $\tilde{L}_{\hat{\lambda} , \hat{r}}$ is continuously differentiable on $\mathbbm{R}^n \times \left( 0 , \infty \right)$, then
\[
0 = \frac{\partial \tilde{L}_{\hat{\lambda} , \hat{r}}}{\partial t} \left( \hat{x} , \hat{t} \right) = \frac{\hat{r}}{2} \left( 1 - \frac{\left\| h \left( \hat{x} \right) 
\right\|^2}{\hat{t}^2} \right).
\]
Thus, $\hat{t} = \left\| h \left( \hat{x} \right) \right\|$. In the case where $\hat{t} = 0$, it cannot occur that $h \left( \hat{x} \right) \neq 0$, because in that situation $\tilde{q} \left( \hat{\lambda} , \hat{r} \right) = \tilde{L}_{\hat{\lambda} , \hat{r}} \left( \hat{x} , \hat{t} \right) = \infty$. However, we know that $\tilde{q} \left( \hat{\lambda} , \hat{r} \right) \leq \tilde{L}_{\hat{\lambda} , \hat{r}} \left( x' , t' \right) < \infty$ for any $\left( x' , t' \right)$ with $t' > 0$. 
            
\noindent \ref{subgrad_q} Take $\left( \lambda , r \right) \in \mathbbm{R}^m \times \left( 0 , \infty \right)$. If $\hat{t} > 0$, then we have:
\begin{align*}
\tilde{q} \left( \lambda , r \right) \leq{} & \tilde{L}_{\lambda , r} \left( \hat{x} , \hat{t} \right) = f \left( \hat{x} \right) + \left\langle \lambda , h \left( \hat{x} \right) \right\rangle + \frac{r}{2 \hat{t}} \left\| h \left( \hat{x} \right) \right\|^2 + \frac{r}{2} \hat{t} \\[2mm]
={} & f \left( \hat{x} \right) + \left\langle \hat{\lambda} , h \left( \hat{x} \right) \right\rangle + \frac{\hat{r}}{2 \hat{t}} \left\| h \left( \hat{x} \right) \right\|^2 + \frac{\hat{r}}{2} \hat{t} + \left\langle h \left( \hat{x} \right) , \lambda -\hat{\lambda} \right\rangle \\[2mm]
+{} & \frac{\left( r - \hat{r} \right)}{2 \hat{t}} \left\| h \left( \hat{x} \right) \right\|^2 + \left( r - \hat{r} \right) \frac{\hat{t}}{2} \\[2mm]
={} & \tilde{q} \left( \hat{\lambda} , \hat{r} \right) + \left\langle \left( h \left( \hat{x} \right) , \hat{t} \right) , \left( \lambda , r \right) -\left( \hat{\lambda} , \hat{r} \right) \right\rangle ,
\end{align*}
where in the last equation we use $\left\| h \left( \hat{x} \right) \right\| = \hat{t}$ (item \ref{t=normh(x)}). For the case when $\hat{t} = 0$, by item \ref{t=normh(x)} we have $h \left( \hat{x} \right) = 0$. Thus:
\begin{align*}
\tilde{q} \left( \lambda , r \right) \leq{} & \tilde{L}_{\lambda , r} \left( \hat{x} , \hat{t} \right) = f \left( \hat{x} \right) = \tilde{L}_{\hat{\lambda} , \hat{r}} \left( \hat{x} , \hat{t} \right) = \tilde{q} \left( \hat{\lambda} , \hat{r} \right) \\[2mm]
={} & \tilde{q} \left( \hat{\lambda} , \hat{r} \right) + \left\langle \left( h \left( \hat{x} \right) , \hat{t} \right) , \left( \lambda , r \right) -\left( \hat{\lambda} , \hat{r} \right) \right\rangle .
\end{align*}
Hence, $- \left( h \left( \hat{x} \right) , \hat{t} \right) \in \partial \left( -\tilde{q} \right) \left( \hat{\lambda} , \hat{r} \right)$. 

\qed
\end{proof}

Now we will study the sequences generated by Algorithm \ref{SLModified}. First, note that if the smoothing parameter is zero, the primal point is feasible, since by Lemma \ref{lem:Aproper}\ref{t=normh(x)}, we have $\left\| h \left( x^k \right) \right\| = t_k = 0$. Also, the dual updates follow a subgradient direction of $- \tilde{q}$, i.e., a proximal point iteration of a linearization of $- \tilde{q}$. Note that the problem
\[
\begin{array}{ll} \displaystyle \mathop{\textrm{minimize}}_{(\lambda,r)} & - \tilde{q} \left( \lambda^k , r_k \right) - \left\langle \left( h \left( x^{k + 1} \right) , t_{k + 1} \right) , \left( \lambda , r \right) - \left( \lambda^k , r_k \right) \right\rangle \\[2mm] & + \displaystyle \frac{t_{k + 1}}{2 r_k} \left\| \left( \lambda , r \right) - \left( \lambda^k , r_k \right) \right\|^2 , \end{array}
\]
has the unique solution
\[
\left( \lambda^{k + 1} , r_{k + 1} \right) = \left( \lambda^k , r_k \right) + \frac{r_k}{t_{k + 1}} \left( h \left( x^{k + 1} \right) , t_{k + 1} \right) .
\]

Clearly, Algorithm \ref{SLModified} will generate a sequence if at Step 1 a global minimizer of $\tilde{L}_{\lambda^k , r_k}$ is found. The existence of such a minimizer can be ensured under various sets of assumptions. Since our result is independent of such assumptions, we will simply assume that $A \left( \lambda^k , r_k \right) \neq \emptyset$.

The next result states that if the algorithm stops, we obtain both a primal and a dual solution, otherwise the generated dual variables will approach the maximum of $\tilde{q}$. 

\begin{theorem} \label{thm:teo1} 
Suppose that $A \left( \lambda , r \right) \neq \emptyset$ for every $\left( \lambda , r \right) \in \mathbbm{R}^m \times \left( 0 , \infty \right)$. Then, the following holds:
\begin{enumerate}[label=(\alph*)]
\item \label{it:SLModStop} If Algorithm \ref{SLModified} stops at the $k$--th iteration, then $x^{k + 1}$ is an optimal primal solution, and $\left( \lambda^k , r_k \right)$ is an optimal dual solution. Moreover, the optimal values of the primal and augmented dual problems are the same.
\item \label{it:SLModNoStop} If $\left( \lambda^k , r_k \right)$ is not a dual solution, then $\tilde{q} \left( \lambda^{k + 1} , r_{k + 1} \right) \geq \tilde{q} \left( \lambda^k , r_k \right)$.
\end{enumerate}
\end{theorem}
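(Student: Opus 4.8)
For part \ref{it:SLModStop}, the plan is to exploit the stopping condition $t_{k+1}=0$ together with Lemma \ref{lem:Aproper}\ref{t=normh(x)}. If the algorithm stops at iteration $k$, then $(x^{k+1},0)\in A(\lambda^k,r_k)$ and $h(x^{k+1})=0$, so $x^{k+1}$ is feasible for \eqref{eq:nlp} and $\tilde q(\lambda^k,r_k)=\tilde L_{\lambda^k,r_k}(x^{k+1},0)=f(x^{k+1})$. The key inequality is weak duality: for any feasible $x$ of \eqref{eq:nlp} and any $(\lambda,r)$, we have $f(x)=\tilde L_{\lambda,r}(x,0)\geq \tilde q(\lambda,r)$, hence $\tilde q(\lambda,r)\leq \inf\{f(x):h(x)=0\}$. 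Combining this with $\tilde q(\lambda^k,r_k)=f(x^{k+1})$ and feasibility of $x^{k+1}$ gives $f(x^{k+1})\leq \inf\{f(x):h(x)=0\}\leq f(x^{k+1})$, so all inequalities are equalities: $x^{k+1}$ solves the primal, $(\lambda^k,r_k)$ maximizes $\tilde q$, and the two optimal values coincide.

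For part \ref{it:SLModNoStop}, the plan is to invoke Lemma \ref{lem:Aproper}\ref{subgrad_q}, which tells us that $-(h(x^{k+1}),t_{k+1})\in\partial(-\tilde q)(\lambda^k,r_k)$. By definition of the subdifferential of the convex function $-\tilde q$, for every $(\lambda,r)$,
\[
-\tilde q(\lambda,r)\geq -\tilde q(\lambda^k,r_k)-\left\langle\left(h(x^{k+1}),t_{k+1}\right),(\lambda,r)-(\lambda^k,r_k)\right\rangle.
\]
Evaluating at $(\lambda,r)=(\lambda^{k+1},r_{k+1})=(\lambda^k,r_k)+\frac{r_k}{t_{k+1}}(h(x^{k+1}),t_{k+1})$ yields
\[
\tilde q(\lambda^{k+1},r_{k+1})\geq \tilde q(\lambda^k,r_k)+\frac{r_k}{t_{k+1}}\left\|\left(h(x^{k+1}),t_{k+1}\right)\right\|^2\geq \tilde q(\lambda^k,r_k),
\]
since $r_k>0$, $t_{k+1}>0$ (we are in the non-stopping case), and the squared norm is nonnegative. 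This gives the desired monotonicity; in fact it shows strict increase unless $h(x^{k+1})=0$ and $t_{k+1}=0$ simultaneously, which is exactly the stopping case.

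I do not anticipate a serious obstacle: both parts reduce to a clean application of the already-established Lemma \ref{lem:Aproper} plus, for part \ref{it:SLModStop}, the elementary weak-duality bound $\tilde q(\lambda,r)\leq f(x)$ for feasible $x$, which follows immediately from the definition of $\tilde L_{\lambda,r}$ at $t=0$. The only point requiring a small amount of care is making the weak-duality argument airtight — namely checking that the primal optimal value is finite and attained, or at least handling the infimum correctly so that the chain of inequalities genuinely closes up; this is where I would be most careful, though it is routine given that $x^{k+1}$ is an explicit feasible point realizing the bound.
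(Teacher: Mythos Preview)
Your argument for part \ref{it:SLModStop} is correct and essentially matches the paper's, the only cosmetic difference being that the paper concludes dual optimality via $(0,0)\in\partial(-\tilde q)(\lambda^k,r_k)$ (Lemma \ref{lem:Aproper}\ref{subgrad_q}) while you invoke weak duality directly; both routes are equivalent here.

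Your argument for part \ref{it:SLModNoStop}, however, contains a genuine sign error that makes the approach fail. From the subgradient inequality
\[
-\tilde q(\lambda,r)\geq -\tilde q(\lambda^k,r_k)-\bigl\langle(h(x^{k+1}),t_{k+1}),(\lambda,r)-(\lambda^k,r_k)\bigr\rangle,
\]
plugging in $(\lambda,r)=(\lambda^{k+1},r_{k+1})$ and multiplying by $-1$ yields
\[
\tilde q(\lambda^{k+1},r_{k+1})\ \le\ \tilde q(\lambda^k,r_k)+\frac{r_k}{t_{k+1}}\bigl\|(h(x^{k+1}),t_{k+1})\bigr\|^2,
\]
an \emph{upper} bound, not the lower bound you claim. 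This is the standard obstruction: concavity of $\tilde q$ places its graph below the supporting hyperplane, so a supergradient step of fixed size cannot be shown to increase $\tilde q$ from the linearization alone.

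The paper circumvents this by working with the \emph{next} minimizer $(x^{k+2},t_{k+2})\in A(\lambda^{k+1},r_{k+1})$ and expanding $\tilde q(\lambda^{k+1},r_{k+1})=\tilde L_{\lambda^{k+1},r_{k+1}}(x^{k+2},t_{k+2})$ directly. Writing this in terms of $\tilde L_{\lambda^k,r_k}(x^{k+2},t_{k+2})\ge \tilde q(\lambda^k,r_k)$ produces two correction terms: one from the $\lambda$-update, bounded below via Cauchy--Schwarz by $-r_k t_{k+2}$, and one from the $r$-update $r_{k+1}=2r_k$, equal to $+r_k t_{k+2}$ (using $t_{k+2}=\|h(x^{k+2})\|$). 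These cancel, giving $\tilde q(\lambda^{k+1},r_{k+1})\ge\tilde q(\lambda^k,r_k)$. The doubling of $r$ is doing real work here; an argument that never touches $r_{k+1}=2r_k$ or the minimizer at the new dual point cannot succeed.
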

\begin{proof} 
\ref{it:SLModStop} If the algorithm terminates at the $k$--th iteration, then $t_{k + 1} = 0$, and therefore $h \left( x^{k + 1} \right) = 0$ (see Lemma \ref{lem:Aproper}\ref{t=normh(x)}). Thus, $\left( 0 , 0 \right) \in \partial \left( - \tilde{q} \right) \left( \lambda^k , r_k \right)$. By the convexity of $- \tilde{q}$, for any $\left( \lambda , r \right) \in \mathbbm{R}^m \times \left( 0 , \infty \right)$ it holds that 
\[
\tilde{q} \left( \lambda , r \right) \leq  \tilde{q} \left( \lambda^k , r_k \right) ,
\]
which implies that $\left( \lambda^k , r_k \right)$ is an optimal dual solution. On the other hand, for any $x$ with $h \left( x \right) = 0$ we have
\[
f \left( x^{k + 1} \right) = \tilde{L}_{\lambda^k , r_k} \left( x^{k + 1} , t_{k + 1} \right) \leq \tilde{L}_{\lambda^k , r_k} \left( x , 0 \right) = f \left( x \right) ,
\]
which means $x^{k + 1}$ is an optimal primal solution. Moreover, there is no duality gap, since
\[
\tilde{q} \left( \lambda^k , r_k \right) = \tilde{L}_{\lambda^k , r_k} \left( x^{k + 1} , t_{k + 1} \right) = f \left( x^{k + 1} \right).
\]

\noindent \ref{it:SLModNoStop} If $\left( \lambda^k , r_k \right)$ is not a dual solution, Algorithm \ref{SLModified} will generate $\left( x^{k + 2} , t_{k + 2} \right) \in A \left( \lambda^{k + 1} , r_{k + 1} \right)$. First, consider the case when $t_{k + 2} > 0$. Given that $t_{k + 2} = \left\| h \left( x^{k + 2} \right) \right\|$ and using the update formula for $\lambda^{k + 1}$ and $r_{k + 1}$ we have:
\begin{align*}
\tilde{q} \left( \lambda^{k + 1} , r_{k + 1} \right) ={} & f \left( x^{k + 2} \right) + \left\langle \lambda^{k + 1} , h \left( x^{k + 2} \right) \right\rangle + \frac{r_{k + 1}}{2 t_{k + 2}} \left\| h \left( x^{k + 2} \right) \right\|^2 \\[2mm]
& + \frac{r_{k + 1}}{2} t_{k + 2} \\[2mm]
={} & f \left( x^{k + 2} \right) + \left\langle \lambda^k , h \left( x^{k + 2} \right) \right\rangle + \frac{r_k}{2 t_{k + 2}} \left\| h \left( x^{k + 2} \right) \right\|^2 + \frac{r_k}{2} t_{k + 2} \\[2mm]
& + \frac{r_k}{t_{k + 1}} \left\langle h \left( x^{k + 1} \right) , h \left( x^{k + 2} \right) \right\rangle + r_k t_{k + 2} \\[2mm]
\geq{} & \tilde{q} \left( \lambda^k , r_k \right) + r_k t_{k + 2} - \frac{r_k}{t_{k + 1}} \left\| h \left( x^{k + 1} \right) \right\| \left\| h \left( x^{k + 2} \right) \right\| \\[2mm]
={} & \tilde{q} \left( \lambda^k , r_k \right) .
\end{align*}

Now, consider the case when $t_{k + 2} = 0$. In this situation, the algorithm stops, which, by item \ref{it:SLModStop}, implies that $\left( \lambda^{k + 1} , r_{k + 1} \right)$ is an optimal dual solution. Since $\left( \lambda^k , r_k \right)$ is not a dual solution, it must be that $\tilde{q} \left( \lambda^{k + 1} , r_{k + 1} \right) > \tilde{q} \left( \lambda^k , r_k \right)$.

\qed
\end{proof}

Boundedness of the sequences generated by an algorithm is crucial for analyzing convergence. The following lemma describes what happens when the sequence of multipliers or the sequence of penalty parameters is bounded.

\begin{lemma} \label{acot_var_dual}
Suppose that $A \left( \lambda , r \right) \neq \emptyset$ for every $\left( \lambda , r \right) \in \mathbbm{R}^m \times \left( 0 , \infty \right)$. Consider that Algorithm \ref{SLModified} generates an infinite sequence $\left\{ \left( \lambda^k , r_k \right) \right\}$.
\begin{enumerate}[label=(\alph*)]
\item If $\left\{ r_k \right\}$ is bounded, then $\left\{ \lambda^k \right\}$ is bounded.
\item If $\left\{ \lambda^k \right\}$ is bounded and the set of optimal dual solutions is nonempty, then $\left\{ r^k \right\}$ is bounded.
\end{enumerate}
\end{lemma}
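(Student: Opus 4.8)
The plan is to derive both statements from two structural facts in the excerpt: the multiplier update of Algorithm~\ref{SLModified} combined with Lemma~\ref{lem:Aproper}\ref{t=normh(x)}, and the subgradient inclusion Lemma~\ref{lem:Aproper}\ref{subgrad_q}. First I would record a preliminary observation. Since the generated sequence is infinite, the stopping test in Step~1 is never triggered, so $t_{k+1}\neq 0$ for every $k$; together with Lemma~\ref{lem:Aproper}\ref{t=normh(x)} this gives $t_{k+1}=\|h(x^{k+1})\|>0$, so Step~2 is well defined and
\[
\|\lambda^{k+1}-\lambda^k\|=\frac{r_k}{t_{k+1}}\,\|h(x^{k+1})\|=r_k .
\]

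For part~(b), let $(\lambda^*,r^*)$ be an optimal dual solution, so $\tilde q(\lambda^*,r^*)\ge\tilde q(\lambda^k,r_k)$ for all $k$. Applying Lemma~\ref{lem:Aproper}\ref{subgrad_q} at $(x^{k+1},t_{k+1})\in A(\lambda^k,r_k)$, the inclusion $-(h(x^{k+1}),t_{k+1})\in\partial(-\tilde q)(\lambda^k,r_k)$ unfolds, via the definition of the subdifferential recalled just before the lemma, into
\[
\tilde q(\lambda,r)\le\tilde q(\lambda^k,r_k)+\langle h(x^{k+1}),\lambda-\lambda^k\rangle+t_{k+1}(r-r_k)\qquad\forall(\lambda,r).
\]
Evaluating at $(\lambda,r)=(\lambda^*,r^*)$ and canceling the $\tilde q(\lambda^k,r_k)$ terms using $\tilde q(\lambda^*,r^*)\ge\tilde q(\lambda^k,r_k)$ leaves $t_{k+1}(r_k-r^*)\le\langle h(x^{k+1}),\lambda^*-\lambda^k\rangle$. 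Bounding the right-hand side by Cauchy--Schwarz and using $\|h(x^{k+1})\|=t_{k+1}$ gives $t_{k+1}(r_k-r^*)\le t_{k+1}\,\|\lambda^*-\lambda^k\|$; dividing by $t_{k+1}>0$ yields
\[
r_k\le r^*+\|\lambda^*\|+\|\lambda^k\| .
\]
Hence, if $\{\lambda^k\}$ is bounded, so is $\{r_k\}$.

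For part~(a), the premise is in fact incompatible with the standing setting: the rule $r_{k+1}=2r_k$ with $r_0>0$ forces $r_k=2^k r_0$, which is unbounded along any infinite sequence, so the hypothesis ``$\{r_k\}$ bounded'' can never hold and the implication is vacuously true. (Equivalently, by the displayed identity $\|\lambda^{k+1}-\lambda^k\|=r_k\to\infty$, so $\{\lambda^k\}$ is in fact always unbounded.) I would state part~(a) in this way, remarking that it is the natural mirror of~(b) and becomes substantive only for variants of the algorithm in which the penalty parameter is updated conditionally rather than doubled at every step. The only points requiring care are the bookkeeping of signs when unfolding the subgradient inequality and checking $t_{k+1}>0$ so that the division by $t_{k+1}$ is legitimate; I do not foresee a deeper obstacle.
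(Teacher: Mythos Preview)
Your proof of part~(b) is correct and coincides with the paper's: both use the subgradient inequality from Lemma~\ref{lem:Aproper}\ref{subgrad_q} at $(\lambda^*,r^*)$, drop the nonpositive term $\tilde q(\lambda^k,r_k)-\tilde q(\lambda^*,r^*)$, apply Cauchy--Schwarz, and cancel $t_{k+1}=\|h(x^{k+1})\|>0$ to obtain $r_k\le r^*+\|\lambda^*-\lambda^k\|$.

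For part~(a) you argue vacuous truth from $r_k=2^kr_0\to\infty$, which is logically valid. The paper instead gives a direct, quantitative argument using the very identity you recorded, $\|\lambda^{j+1}-\lambda^j\|=r_j$. Summing and telescoping,
\[
\|\lambda^{k+1}-\lambda^0\|\le\sum_{j=0}^k\|\lambda^{j+1}-\lambda^j\|=\sum_{j=0}^k r_j=\sum_{j=0}^k (r_{j+1}-r_j)=r_{k+1}-r_0,
\]
so boundedness of $\{r_k\}$ immediately gives boundedness of $\{\lambda^k\}$. The payoff of the paper's route is that the bound $\|\lambda^{k+1}-\lambda^0\|\le r_{k+1}-r_0$ is a genuine structural relation between the two sequences, and it survives any penalty update of the form $r_{k+1}=r_k+\text{(nonnegative increment)}$, not just the doubling rule; it is exactly this kind of inequality that one wants when reusing the lemma in variants or in the contradiction arguments that follow. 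Your approach is shorter but ties the result to the specific rule $r_{k+1}=2r_k$ and yields no usable inequality. Since you already have $\|\lambda^{k+1}-\lambda^k\|=r_k$, the telescoping step is a one-line addition that would make your argument both substantive and update-rule-agnostic.
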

\begin{proof}
If an infinite sequence is generated, then $t_k > 0$ for all $k$. Then $t_k = \left\| h \left( x^k \right) \right\|$ and the first part follows from the following  expressions:
\begin{gather*}
\left\| \lambda_{k + 1} - \lambda_0 \right\| \leq \sum_{j = 0}^k \left\| \lambda_{j + 1} -\lambda_j \right\| = \sum_{j = 0}^k \frac{r_j}{t_{j + 1}} \left\| h \left( x^{j + 1} \right) \right\| = \sum_{j = 0}^k r_j , \\[2mm]
r_{k + 1} - r_0 = \sum_{j = 0}^k \left( r_{j + 1} - r_j \right) = \sum_{j = 0}^k r_j .
\end{gather*}
To prove the second statement, suppose that $\left\{ \lambda^k \right\}$ is bounded and take a dual solution $\left( \bar{\lambda} , \bar{r} \right)$. Since $- \left( h \left( x^{k + 1} \right) , t_{k + 1} \right) \in \partial \left( - \tilde{q} \right) \left( \lambda^k , r_k \right)$, we have that
\[
\tilde{q} \left( \bar{\lambda} , \bar{r} \right) \leq \tilde{q} \left( \lambda^k , r_k \right) + \left\langle h \left( x^{k + 1} \right) , \bar{\lambda} - \lambda^k \right\rangle + t_{k + 1} \left( \bar{r} - r_k \right) .
\]
Therefore,
\[
r_k \leq \frac{\tilde{q} \left( \lambda^k , r_k \right) - \tilde{q} \left( \bar{\lambda} , \bar{r} \right)}{t_{k + 1}} + \frac{\left\langle h \left( x^{k + 1} \right) , \bar{\lambda} - \lambda^k \right\rangle}{t_{k + 1}} + \bar{r} \leq \left\| \bar{\lambda} - \lambda^k \right\| + \bar{r} , 
\]
where we use the fact that $\left( \bar{\lambda} , \bar{r} \right)$ is a dual solution, the Cauchy--Schwarz inequality, and that $t_{k + 1} = \left\| h \left( x^{k + 1} \right) \right\|$. Consequently, if $\left\{ \lambda_k \right\}$ is bounded we conclude that $\left\{ r_k \right\}$ must also be bounded.

\qed
\end{proof}

The next lemma provides a sufficient condition for the boundedness of the sequence of penalty parameters.

\begin{lemma} \label{acot_rk}
Under the assumptions of Lemma \ref{acot_var_dual}, if the set of optimal dual solutions is nonempty, then the sequence $\left\{ r_k \right\}$ is bounded.
\end{lemma}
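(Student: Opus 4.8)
The plan is to argue by contradiction, combining the two parts of Lemma \ref{acot_var_dual}. Suppose, toward a contradiction, that $\{r_k\}$ is unbounded. Since $r_{k+1} = 2 r_k$, the sequence $\{r_k\}$ is strictly increasing to $+\infty$ in any case, so ``unbounded'' is automatic here; the real content is to show the sequence must be finite, i.e.\ the algorithm must stop. So instead I would phrase it as: if the algorithm generates an infinite sequence, derive a contradiction with the hypothesis that the set of optimal dual solutions is nonempty.

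First I would invoke Lemma \ref{acot_var_dual}(b): its contrapositive says that if $\{\lambda^k\}$ is bounded and the dual solution set is nonempty, then $\{r_k\}$ is bounded — but $r_k = 2^k r_0 \to \infty$, a contradiction. Hence $\{\lambda^k\}$ must be \emph{unbounded}. Next I would apply Lemma \ref{acot_var_dual}(a) in contrapositive form: if $\{\lambda^k\}$ is unbounded, then $\{r_k\}$ is unbounded — which is consistent, so this alone gives nothing. The key quantitative estimate is the one buried in the proof of Lemma \ref{acot_var_dual}(b), namely
\[
r_k \leq \left\| \bar\lambda - \lambda^k \right\| + \bar r ,
\]
valid for any fixed dual solution $(\bar\lambda, \bar r)$ and all $k$. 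I would pair this with the bound from the proof of Lemma \ref{acot_var_dual}(a),
\[
\left\| \lambda^{k+1} - \lambda^0 \right\| \leq \sum_{j=0}^k r_j ,
\]
and with the explicit geometric growth $r_j = 2^j r_0$, to reach a contradiction: from the second estimate, $\|\lambda^{k+1}\| \leq \|\lambda^0\| + \sum_{j=0}^k 2^j r_0 = \|\lambda^0\| + (2^{k+1}-1) r_0$; substituting into the first estimate gives $r_k = 2^k r_0 \leq \|\bar\lambda\| + \|\lambda^k\| + \bar r \leq \|\bar\lambda\| + \|\lambda^0\| + (2^k - 1) r_0 + \bar r$, i.e.\ $2^k r_0 \leq 2^k r_0 + C$ for a constant $C$ independent of $k$, which is not yet a contradiction. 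So I would instead combine the estimates more carefully: plug $r_k \le \|\bar\lambda - \lambda^k\| + \bar r$ into $\sum_{j=0}^k r_j$ and track the recursion, or — cleaner — observe that $t_{k+1} = r_k / \|\lambda^{k+1}-\lambda^k\|^{-1}\cdots$; actually the cleanest route is to note that $\|\lambda^{k+1}-\lambda^k\| = r_k$ (from Step 2 and $t_{k+1} = \|h(x^{k+1})\|$), so the bound $r_k \leq \|\bar\lambda - \lambda^k\| + \bar r$ forces $\|\lambda^k\|$ to grow at least geometrically, while the triangle-inequality tail $\sum_{j\ge k} \|\lambda^{j+1}-\lambda^j\| = \sum_{j\ge k} 2^j r_0$ diverges too fast to be compatible with $r_k \le \|\bar\lambda-\lambda^k\|+\bar r$ once one isolates $\|\lambda^k\|$ between consecutive such bounds.

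The main obstacle, and the step I would spend the most care on, is extracting the contradiction purely from the inequality $r_k \leq \|\bar\lambda - \lambda^k\| + \bar r$ together with $\|\lambda^{k+1} - \lambda^k\| = r_k$. Writing $\delta_k := \|\lambda^k - \bar\lambda\|$, these give $\delta_{k+1} \geq r_k - \delta_k \geq \delta_k \cdot(\text{growth}) - \cdots$; more precisely $r_k \le \delta_k + \bar r$ and $\delta_{k+1} \le \delta_k + r_k \le 2\delta_k + \bar r$, which bounds $\delta_k$ by $O(2^k)$ — consistent, not contradictory. Conversely $\delta_{k+1} \ge r_k - \delta_k$ and $r_k = 2r_{k-1} \ge 2(\text{something})$... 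The honest conclusion is that one should instead use Lemma \ref{acot_var_dual}(b) directly without contradiction: under the stated hypotheses, \emph{either} the algorithm stops (in which case $\{r_k\}$ is a finite sequence, hence bounded) \emph{or} it generates an infinite sequence, and then I must show $\{\lambda^k\}$ is bounded so that part (b) applies. To show $\{\lambda^k\}$ bounded, I would use that for a dual solution $(\bar\lambda,\bar r)$ the subgradient inequality at $(\lambda^k, r_k)$ reads $\tilde q(\bar\lambda,\bar r) \le \tilde q(\lambda^k,r_k) - \langle h(x^{k+1}), \lambda^k - \bar\lambda\rangle - t_{k+1}(r_k - \bar r)$, and exploit monotonicity of $\{\tilde q(\lambda^k,r_k)\}$ from Theorem \ref{thm:teo1}(b) to show the partial sums $\sum_j r_j \|h(x^{j+1})\|$-type quantities telescope against a bounded quantity — this is precisely where the nonemptiness of the dual solution set does the work. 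Once $\{\lambda^k\}$ is bounded, Lemma \ref{acot_var_dual}(b) closes the argument immediately.
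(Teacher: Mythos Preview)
Your overall target is right---show $\{\lambda^k\}$ is bounded and invoke Lemma~\ref{acot_var_dual}(b)---but you never actually prove that $\{\lambda^k\}$ is bounded. All of your concrete attempts (pairing $r_k \le \|\bar\lambda - \lambda^k\| + \bar r$ with $\|\lambda^{k+1}-\lambda^k\| = r_k$, or with $\|\lambda^{k+1}-\lambda^0\|\le \sum r_j$) only use triangle-inequality-level information and, as you correctly observe, produce estimates that are \emph{consistent} with geometric blow-up rather than contradictory to it. The final gesture toward ``telescoping partial sums via monotonicity of $\tilde q(\lambda^k,r_k)$'' is too vague to be a proof and is not how the argument actually closes.

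The missing idea is the standard F\'ejer-type squared-norm computation from subgradient methods. Fix a dual solution $(\bar\lambda,\bar r)$ and expand
\[
\|\bar\lambda - \lambda^{k+1}\|^2
= \|\bar\lambda - \lambda^k\|^2 + \frac{r_k^2}{t_{k+1}^2}\|h(x^{k+1})\|^2
- \frac{2r_k}{t_{k+1}}\langle \bar\lambda - \lambda^k,\, h(x^{k+1})\rangle .
\]
Now use the subgradient inequality $\tilde q(\bar\lambda,\bar r)\le \tilde q(\lambda^k,r_k) + \langle h(x^{k+1}),\bar\lambda-\lambda^k\rangle + t_{k+1}(\bar r - r_k)$ to bound the cross term, together with $t_{k+1}=\|h(x^{k+1})\|$ and $\tilde q(\lambda^k,r_k)\le \tilde q(\bar\lambda,\bar r)$. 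This yields
\[
\|\bar\lambda - \lambda^{k+1}\|^2 \le \|\bar\lambda - \lambda^k\|^2 + r_k(2\bar r - r_k),
\]
and once $r_k > 2\bar r$ (which, assuming $\{r_k\}$ is unbounded, happens for all large $k$) the right-hand side is at most $\|\bar\lambda-\lambda^k\|^2$. Hence $\{\lambda^k\}$ is bounded, and Lemma~\ref{acot_var_dual}(b) gives the contradiction. The point is that squaring the norm and using the subgradient inequality on the inner product captures exactly the cancellation your triangle-inequality bounds throw away.
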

\begin{proof}
Let $\left( \bar{\lambda} , \bar{r} \right)$ be a dual solution. To arrive at a contradiction, assume that $\left\{ r_k \right\}$ is unbounded. Since $\left\{r_k  \right\}$ is increasing, there exists $k_0$ such that $2 \bar{r} < r_k$ for all $k \geq k_0$. For $k \geq k_0$, using the concavity of $\tilde{q}$, we have:
\begin{align*}
\left\| \bar{\lambda} - \lambda^{k + 1} \right\|^2 & = \left\| \bar{\lambda} - \left( \lambda^k + \frac{r_k}{t_{k + 1}} h \left( x^{k + 1} \right) \right) \right\|^2 \\[2mm]
& = \left\| \bar{\lambda} - \lambda^k \right\|^2 + \frac{r_k^2}{t_{k + 1}^2} \left\| h \left( x^{k + 1} \right) \right\|^2 - 2 \frac{r_k}{t_{k + 1}} \left\langle \bar{\lambda} - \lambda^k , h \left( x^{k + 1} \right) \right\rangle \\[2mm]
& \leq \left\| \bar{\lambda} - \lambda^k \right\|^2 + r_k^2 + 2 \frac{r_k}{t_{k + 1}} \left( \tilde{q} \left( \lambda^k , r_k \right) - \tilde{q} \left( \bar{\lambda} , \bar{r} \right) + t_{k + 1} \left( \bar{r} - r_k \right) \right) \\[2mm]
& \leq \left\| \bar{\lambda} - \lambda^k \right\|^2 + r_k \left( 2 \bar{r} - r_k \right) \\[2mm]
& \leq \left\| \bar{\lambda} - \lambda^k \right\|^2 ,
\end{align*}
where we used the fact that $\tilde{q} \left( \lambda^k , r_k \right) \leq \tilde{q} \left( \bar{\lambda} , \bar{r} \right)$. Thus, $\left\{ \lambda^k \right\}$ is bounded, and by Lemma \ref{acot_var_dual}, $\left\{ r_k \right\}$ must also be bounded, leading to a contradiction.

\qed
\end{proof}

The following theorem guarantees the finite termination of Algorithm \ref{SLModified}, provided that dual solutions exist.

\begin{theorem}
Suppose that $A \left( \lambda , r \right) \neq \emptyset$ for every $\left( \lambda , r \right) \in \mathbbm{R}^m \times \left( 0 , \infty \right)$ and that the set of dual solutions is nonempty. Then, there exists $\bar{k} > 0$ such that Algorithm \ref{SLModified} stops at the $\bar{k}$--th iteration. In particular, $x^{\bar{k} + 1}$ is a primal solution, and $\left( \lambda^{\bar{k}} , r_{\bar{k}} \right)$ is a dual solution.
\end{theorem}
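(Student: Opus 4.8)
\section*{Proof proposal for the final theorem}

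The plan is to argue by contradiction, exploiting the tension between the geometric growth $r_{k+1}=2r_k$ prescribed in Step~2 and the boundedness of $\{r_k\}$ guaranteed by Lemma~\ref{acot_rk}. Suppose that Algorithm~\ref{SLModified} does not stop at any iteration. Then, by construction, at every iteration $k$ the stopping test in Step~1(b) fails, i.e. $t_{k+1}\neq 0$, so Step~2 is executed for all $k$ and an infinite sequence $\{(\lambda^k,r_k)\}$ is generated. In particular the recursion $r_{k+1}=2r_k$ holds for all $k$, whence $r_k=2^k r_0\to\infty$, so $\{r_k\}$ is unbounded.

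On the other hand, the hypotheses of the theorem are exactly those of Lemma~\ref{acot_rk}: we have $A(\lambda,r)\neq\emptyset$ for every $(\lambda,r)\in\mathbbm{R}^m\times(0,\infty)$, the algorithm generates an infinite sequence, and the set of optimal dual solutions is nonempty. Lemma~\ref{acot_rk} then asserts that $\{r_k\}$ is bounded, contradicting the previous paragraph. Hence the assumption was false and there exists $\bar{k}>0$ at which Algorithm~\ref{SLModified} stops, meaning $t_{\bar{k}+1}=0$.

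Finally, the characterization of the output follows immediately from Theorem~\ref{thm:teo1}\ref{it:SLModStop}: since the algorithm stops at the $\bar{k}$--th iteration, $x^{\bar{k}+1}$ is an optimal primal solution and $(\lambda^{\bar{k}},r_{\bar{k}})$ is an optimal dual solution (and, as a byproduct, there is no duality gap). I do not anticipate any real obstacle here; the only point requiring care is the bookkeeping observation that ``generating an infinite sequence'' is equivalent to ``never triggering the stop in Step~1(b)'', which is precisely what allows both the unconditional doubling $r_k=2^kr_0$ and the applicability of Lemma~\ref{acot_rk}. Everything else is a direct citation of the already established results.
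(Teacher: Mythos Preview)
Your argument is correct and coincides with the paper's proof: both derive a contradiction between the unbounded growth of $\{r_k\}$ forced by the update rule and the boundedness supplied by Lemma~\ref{acot_rk}, then invoke Theorem~\ref{thm:teo1}\ref{it:SLModStop}. The only cosmetic difference is that you write $r_k=2^k r_0$ directly, whereas the paper reaches $r_k\ge (k+1)r_0$ via a telescoping sum; the logic is identical.
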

\begin{proof}
To derive a contradiction, suppose that Algorithm \ref{SLModified} does not have finite termination. Then, for all $k$, we obtain 
\[
r_k - r_0 = \sum_{j = 0}^{k - 1} \left( r_{j + 1} - r_j \right) = \sum_{j = 0}^{k - 1} r_j \geq \sum_{j = 0}^{k - 1} r_0 = k r_0 ,
\]
which contradicts the boundedness of $\left\{ r_k \right\}$ as established by Lemma \ref{acot_rk}. Therefore, there exists $\bar{k}$ such that $t_{\bar{k} + 1} = 0$, and, by Theorem \ref{thm:teo1}\ref{it:SLModStop}, $x^{\bar{k} + 1}$ is a primal solution and $\left( \lambda^{\bar{k}} , r_{\bar{k}} \right)$ is a dual solution.

\qed
\end{proof}

It is important to emphasize that in the case of no duality gap, i.e., when $\tilde{q} \left( \lambda , r \right) = f \left( x \right)$ for feasible dual and primal points $\left( \lambda , r \right)$ and $x$, respectively, the vector $\lambda$ is considered as a Lagrange multiplier in an extended sense. The structure of our augmented dual function enables us to show the existence of a Lagrange multiplier that satisfies \eqref{eq:lagrange}. 

\begin{proposition}
Suppose that $A \left( \lambda , r \right) \neq \emptyset$ for every $\left( \lambda , r \right) \in \mathbbm{R}^m \times \left( 0 , \infty \right)$. If Algorithm \ref{SLModified} terminates at the $k$--th iteration, then $x^{k + 1}$ is a stationary point of \eqref{eq:nlp}. 
\end{proposition}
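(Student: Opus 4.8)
The plan is to exploit the fact that, upon termination, $x^{k+1}$ is not merely an optimal primal solution but an \emph{unconstrained} global minimizer of the sharp augmented Lagrangian $x \mapsto \bar{L}_1\left(x;\lambda^k,r_k\right)$, and then to write down the first--order optimality condition of this nonsmooth unconstrained problem.

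First I would record the consequences of the stopping rule. Since the algorithm stops at iteration $k$, we have $t_{k+1}=0$ and $\left(x^{k+1},0\right)\in A\left(\lambda^k,r_k\right)$; by Lemma \ref{lem:Aproper}\ref{t=normh(x)} this forces $h\left(x^{k+1}\right)=0$, so $x^{k+1}$ is feasible and $\bar{L}_1\left(x^{k+1};\lambda^k,r_k\right)=f\left(x^{k+1}\right)$. Next, for an arbitrary $x\in\mathbbm{R}^n$, using the identity $\bar{L}_1\left(x;\lambda,r\right)=\min_{t\geq 0}\tilde{L}_{\lambda,r}\left(x,t\right)$ from Section \ref{sec:exact} (the minimum being attained at $t=\left\|h\left(x\right)\right\|$), the global optimality of $\left(x^{k+1},0\right)$ gives
\[
f\left(x^{k+1}\right)=\tilde{L}_{\lambda^k,r_k}\left(x^{k+1},0\right)\leq \min_{t\geq 0}\tilde{L}_{\lambda^k,r_k}\left(x,t\right)=\bar{L}_1\left(x;\lambda^k,r_k\right).
\]
Hence $x^{k+1}$ is a global minimizer on $\mathbbm{R}^n$ of $F:=\bar{L}_1\left(\cdot;\lambda^k,r_k\right)=L\left(\cdot,\lambda^k\right)+r_k\left\|h\left(\cdot\right)\right\|$.

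The core step is the first--order condition for this unconstrained nonsmooth problem. Because $h\left(x^{k+1}\right)=0$, a first--order Taylor expansion $h\left(x^{k+1}+sd\right)=s\,h'\left(x^{k+1}\right)d+o\left(s\right)$ shows that for every $d\in\mathbbm{R}^n$ the one--sided directional derivative of $x\mapsto\left\|h\left(x\right)\right\|$ at $x^{k+1}$ equals $\left\|h'\left(x^{k+1}\right)d\right\|$, where $h'\left(x^{k+1}\right)$ is the Jacobian of $h$. Consequently $F$ is directionally differentiable at $x^{k+1}$ with $F'\left(x^{k+1};d\right)=\left\langle\nabla_x L\left(x^{k+1},\lambda^k\right),d\right\rangle+r_k\left\|h'\left(x^{k+1}\right)d\right\|$, and minimality yields $F'\left(x^{k+1};d\right)\geq 0$ for all $d$. (Equivalently, one may invoke the Clarke subdifferential chain rule for the convex function $\left\|\cdot\right\|$ composed with the $C^1$ map $h$, obtaining $0\in\nabla_x L\left(x^{k+1},\lambda^k\right)+r_k\left\{h'\left(x^{k+1}\right)^{\top}v:\left\|v\right\|\leq 1\right\}$.) Restricting the inequality to $d\in\ker h'\left(x^{k+1}\right)$ and applying it to both $d$ and $-d$ gives $\left\langle\nabla_x L\left(x^{k+1},\lambda^k\right),d\right\rangle=0$ on $\ker h'\left(x^{k+1}\right)$, i.e.\ $\nabla_x L\left(x^{k+1},\lambda^k\right)\in\left(\ker h'\left(x^{k+1}\right)\right)^{\perp}=\operatorname{range}h'\left(x^{k+1}\right)^{\top}$. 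Thus there is $v\in\mathbbm{R}^m$ with $\nabla_x L\left(x^{k+1},\lambda^k\right)+h'\left(x^{k+1}\right)^{\top}v=0$; setting $\lambda:=\lambda^k+v$ we get $\nabla_x L\left(x^{k+1},\lambda\right)=0$, and together with $h\left(x^{k+1}\right)=0$ this is exactly \eqref{eq:lagrange}, so $x^{k+1}$ is a stationary point of \eqref{eq:nlp}.

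The only delicate point is the directional--derivative computation for $\left\|h\left(\cdot\right)\right\|$ at the feasible point $x^{k+1}$, which must be justified through the Taylor expansion of $h$; after that everything reduces to elementary linear algebra (the $\pm d$ orthogonality argument on $\ker h'\left(x^{k+1}\right)$). Note that no constraint qualification is required, since the multiplier is produced directly from this orthogonality relation rather than from a separation or KKT argument.
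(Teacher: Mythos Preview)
Your proof is correct and follows essentially the same route as the paper: both arguments first show that termination forces $h(x^{k+1})=0$ and that $x^{k+1}$ is a global minimizer of $\bar{L}_1(\cdot;\lambda^k,r_k)$, and then read off stationarity from the first--order optimality condition of this nonsmooth unconstrained problem.

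The only difference is in how the multiplier is extracted in the last step. The paper appeals directly to the subdifferential formula
\[
\partial \bar{L}_1\left(x^{k+1};\lambda^k,r_k\right)=\left\{\nabla f\left(x^{k+1}\right)+\nabla h\left(x^{k+1}\right)\left(\lambda^k+r_k\xi\right)\ \middle|\ \|\xi\|\leq 1\right\},
\]
which is exactly the Clarke chain--rule computation you mention parenthetically, and so obtains a Lagrange multiplier of the specific form $\lambda^k+r_k\xi^k$ with $\|\xi^k\|\leq 1$. Your primary argument instead computes the one--sided directional derivative, restricts to $\ker h'(x^{k+1})$, and uses the $\pm d$ trick to place $\nabla_x L(x^{k+1},\lambda^k)$ in $\operatorname{range}h'(x^{k+1})^{\top}$. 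This is more elementary---it avoids quoting any nonsmooth calculus---but it yields only an unquantified multiplier $\lambda^k+v$, whereas the paper's version carries the additional information $\|v\|\leq r_k$. For the bare statement of the proposition this makes no difference, and your parenthetical already recovers the sharper bound.
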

\begin{proof}
From Theorem \ref{thm:teo1}\ref{it:SLModStop}, we know that $h \left( x^{k + 1} \right) = 0$ and $\tilde{q} \left( \lambda^k , r_k \right) = f \left( x^{k + 1} \right)$. Therefore,
\[
\bar{L}_1 \left( x^{k + 1} ; \lambda^k , r_k \right) = f \left( x^{k + 1} \right) = \tilde{q} \left( \lambda^k , r_k \right) \leq \tilde{L}_{\lambda^k , r_k} \left( x , t \right) ,
\]
for all $\left( x , t \right)$. Taking the minimum for $t \geq 0$ we obtain that $\bar{L}_1 \left( x^{k + 1} ; \lambda^k , r_k \right) \leq \bar{L}_1 \left( x ; \lambda^k , r_k \right)$. Hence, $x^{k + 1}$ is a minimizer of $\bar{L}_1$ and thus
\[
0 \in \partial \bar{L}_1 \left( x^{k + 1} ; \lambda^k , r_k \right) = \left\{ \nabla  f \left( x^{k + 1} \right) + \nabla h \left( x^{k + 1} \right) \left( \lambda^k + r_k \xi \right) \mid \left\| \xi \right\| \leq 1 \right\} .
\]
Therefore, $0 = \nabla f \left( x^{k + 1} \right) + \nabla h \left( x^{k + 1} \right) \left(  \lambda^k + r_k \xi^k \right)$ for some $\xi^k$ with $\left\| \xi^k \right\| \leq 1$. In summary, $x^{k + 1}$ is a stationary point with the associated Lagrange multiplier $\lambda^k + r_k \xi^k$.
    
\qed
\end{proof}

Note that Step 1a of Algorithm \ref{SLModified} requires to tackle two issues: 
\begin{itemize}
\item dealing with a nondifferentiable objective function when $t = 0$,
\item finding an exact global minimizer.
\end{itemize}

Regarding the nondifferentiability, it is possible that all solutions are of the form $\left( x , 0 \right)$ where $\tilde{L}_{\lambda , r}$ is not continuous and hence not differentiable. Additionally, since most optimization solvers assume some degree of smoothness for the objective function and constraints, numerical issues may arise. 

Furthermore, global optimization algorithms tend to be slow or unreliable for large-- or medium-- scale problems \cite{birgin2014practical}. Moreover, exact solutions are unattainable due to the limitations of finite arithmetic in computers.

In preliminary implementations of Algorithm \ref{SLModified}, we observed that Step 1a could not finish satisfactorily when $t_k$ was close to zero and the penalty parameter $r_k$ became excessively large.

In the next section we propose a strategy to address the nondifferentiability at $t = 0$. We also relax the requirement of finding an exact solution to the subproblem, allowing for an inexact stationary point.

\section{Inexact Methods}\label{sec:inexact}
In this section, we present a primal approach to the previous algorithm. The main advantage of this approach is that it allows us to replace exact minimizers with inexact stationary points. It is well known that for nonsmooth functions, such as $x \mapsto \left| x \right|$, there may be no points in a neighborhood of the minimizer where the derivative of the function is close to zero. This behavior can also be observed in $\tilde{L}_{\lambda , r}$ when applied to simple problems, as illustrated in the next example.

\begin{example} \label{ejem: L tilde no funciona}
Consider the following problem:
\begin{equation} \label{eq: L tilde no funciona}
\begin{array}{cl}
\textrm{minimize} & \frac{1}{2} x^2 \\[2mm]
\textrm{subject to} & x = 0 .
\end{array}
\end{equation}
The solution is $\bar{x} = 0$ with the associated Lagrange multiplier $\bar{\lambda} = 0$. For this problem, for each pair $\left( \lambda , r \right)$, we have
\[
\tilde{L}_{\lambda , r} \left( x , t \right) = \left\{ \begin{array}{ll}
\frac{1}{2} x^2 + \lambda x + \frac{r}{2 t} x^2 + \frac{r}{2} t , & \quad t > 0 , \\[2mm]
0 , & \quad x = t = 0 , \\[2mm]
\infty , & \quad \textrm{otherwise} .
\end{array} \right.
\]

Let $r \geq \left| \lambda \right| + c$ for some $c > 0$, and suppose there exists $\left( x , t \right)$ with $t > 0$ such that $\left\| \nabla \tilde{L}_{\lambda , r} \left( x , t \right) \right\| \leq \varepsilon$. Then, there exist $p$ and $q$ such that $p^2 + q^2 \leq 1$ and
\begin{align}
\varepsilon q & = \frac{\partial \tilde{L}_{\lambda , r}}{\partial x} \left( x , t \right) = x + \lambda + \frac{r}{t} x , \label{eps_est_x} \\[2mm]
\varepsilon p & = \frac{\partial \tilde{L}_{\lambda , r}}{\partial t} \left( x , t \right) = \frac{r}{2} \left( 1 - \frac{x^2}{t^2} \right) . \label{eps_est_t}
\end{align}
From \eqref{eps_est_x}, we have that $x = \left( \varepsilon q - \lambda \right) t / \left( t + r \right)$, and from \eqref{eps_est_t}, we get $0 = \left( 1 - 2 \varepsilon p / r \right)  t^2 - x^2$. Combining these two equations, we obtain
\begin{align*}
0 & = \left( 1 - \frac{2 \varepsilon p}{r} \right) \left( t + r \right)^2 t^2 - \left( \varepsilon q - \lambda \right)^2 t^2 \\[2mm]
& = \left[ \left( 1 - \frac{2 \varepsilon p}{r} \right) \left( t + r \right)^2 - \left( \varepsilon q - \lambda \right)^2 \right] t^2 .
\end{align*}
Since $t > 0$, for $\varepsilon$ small enough such that $1 - 2 \varepsilon p / r > 0$, we get 
\[
t = \frac{\left| \varepsilon q - \lambda \right|}{\sqrt{ 1 - 2 \varepsilon p / r}} - r 
\leq \frac{\varepsilon + \left| \lambda \right|}{\sqrt{1 - 2 \varepsilon p / r}} - r
\leq \frac{\varepsilon + r - c }{\sqrt{1 - 2 \varepsilon p / r}} - r .
\]
This leads to a contradiction, as the rightmost term is negative for sufficiently small $\varepsilon$. Hence, there is no $\left( x , t \right)$ with $t > 0$ such that $\left\| \nabla \tilde{L}_{\lambda , r} \left( x , t \right) \right\| \leq \varepsilon$ for sufficiently small $\varepsilon$.
\end{example}

To ensure the existence of inexact stationary points for a continuously differentiable function, we introduce a barrier function associated with the condition $t \geq 0$. For computational simplicity, we employ the inverse barrier function. The function to be minimized is defined as:
\begin{equation}
\hat{L}^s_{\lambda , r} \left( x , t \right) = \tilde{L}_{\lambda , r} \left( x , t \right) + \frac{r}{2 t} s^2 , \label{L tilde + barrera}
\end{equation}
where $s > 0$ is the barrier parameter. This function is clearly lower semicontinuous and continuously differentiable for $t > 0$. The existence of inexact stationary points is shown in the following proposition.

\begin{proposition} \label{existencia (x(s),t(s))}
Assume that $f$ is bounded from below and $h$ is bounded. Then, for any $s > 0$ and $\tilde{\varepsilon} > 0$, there exists a pair $\left( x \left( s \right) , t \left( s \right) \right)$ such that $\left\| \nabla \hat{L}^s_{\lambda , r} \left( x \left( s \right) , t \left( s \right) \right) \right\| \leq \tilde{\varepsilon}$.
\end{proposition}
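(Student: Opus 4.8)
The plan is to show that $\hat{L}^s_{\lambda,r}$ attains its infimum over $\mathbb{R}^n \times (0,\infty)$, or at least has an approximate minimizer in the interior, and that a point near that infimum must be (approximately) stationary because the barrier term forces the gradient to be well-behaved away from the boundary $t = 0$. First I would argue that $\hat{L}^s_{\lambda,r}$ is bounded from below: writing it out, $\hat{L}^s_{\lambda,r}(x,t) = f(x) + \langle \lambda, h(x)\rangle + \frac{r}{2t}\|h(x)\|^2 + \frac{r}{2t}s^2 + \frac{r}{2}t$, and using that $f$ is bounded below, $h$ is bounded (so $\langle\lambda,h(x)\rangle$ is bounded below), and the remaining terms $\frac{r}{2t}(\|h(x)\|^2 + s^2) + \frac{r}{2}t \geq \frac{r}{2t}s^2 + \frac{r}{2}t \geq r s$ by AM–GM, we get a finite lower bound independent of $(x,t)$. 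Call the infimum $\mu := \inf_{(x,t)} \hat{L}^s_{\lambda,r}(x,t) > -\infty$.

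Next I would fix $\tilde\varepsilon > 0$ and pick a point $(x_0,t_0)$ with $\hat{L}^s_{\lambda,r}(x_0,t_0) < \mu + \delta$ for a small $\delta$ to be chosen. The key observation is that the sublevel set $\{(x,t) : \hat{L}^s_{\lambda,r}(x,t) \leq \mu + \delta\}$ keeps $t$ in a compact subinterval $[a,b] \subset (0,\infty)$: the term $\frac{r}{2}t$ forces $t \leq b$, and the term $\frac{r}{2t}s^2$ forces $t \geq a$ where $a$ depends only on $s$, $r$, $\delta$, $\mu$ and the lower bounds above. On this region $\hat{L}^s_{\lambda,r}$ is $C^1$ and $\nabla \hat{L}^s_{\lambda,r}$ is therefore bounded in terms of the (bounded) quantities $\nabla f$, $\nabla h$, $h$ on the relevant set — wait, $x$ need not lie in a compact set, so instead I would invoke Ekeland's variational principle directly: for any $\delta > 0$ there is a point $(x(s),t(s))$ with $\hat{L}^s_{\lambda,r}(x(s),t(s)) \leq \mu + \delta$, with $t(s) \in [a,b]$ as above, and such that $(x(s),t(s))$ minimizes the perturbed function $(x,t) \mapsto \hat{L}^s_{\lambda,r}(x,t) + \sqrt{\delta}\,\|(x,t) - (x(s),t(s))\|$. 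Since $t(s) > 0$ lies in the interior where the function is differentiable, first-order optimality for the perturbed problem yields $\|\nabla \hat{L}^s_{\lambda,r}(x(s),t(s))\| \leq \sqrt{\delta}$. Choosing $\delta \leq \tilde\varepsilon^2$ finishes the argument.

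The main obstacle is the noncompactness in the $x$ variable: the sublevel sets need not be bounded (e.g. if $f$ is bounded below but does not grow), so one cannot simply extract a minimizer by Weierstrass. Ekeland's variational principle is the natural device to sidestep this, since it only needs lower semicontinuity and lower boundedness — both of which $\hat{L}^s_{\lambda,r}$ enjoys on $\mathbb{R}^n\times\mathbb{R}$ (with the convention that it is $+\infty$ for $t \leq 0$ unless $t=0, h(x)=0$). One must only take care that the Ekeland point has $t(s) > 0$ — this is guaranteed because any point with $t = 0$ has value either $f(x) \geq \mu + \delta$ is not automatic, so more carefully: points with $t=0$ form a set on which the function equals $f(x)$, and by comparison with interior points whose value can be made close to $\inf f$ plus $O(rs)$... the cleanest route is to first note $\mu < \inf\{f(x) : h(x)=0\}$ is generally false, so instead one simply restricts Ekeland's principle to the open set $\mathbb{R}^n \times (0,\infty)$, applies it there (the function is continuous and bounded below on this open set, and the infimum over the open set equals $\mu$ up to the barrier making boundary values irrelevant for small enough perturbation radius), and concludes. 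I would present the Ekeland-on-the-open-set version to keep the differentiability hypothesis clean.
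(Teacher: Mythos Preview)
Your approach is essentially the same as the paper's: show $\hat{L}^s_{\lambda,r}$ is bounded below, then invoke a variational principle (the paper cites Rockafellar--Wets, Proposition~10.44, which is an Ekeland-type approximate Fermat rule for lsc functions) to produce a point with small (sub)gradient, and finally observe that this point lies where the function is smooth so the subgradient is the classical gradient. Your AM--GM lower bound is a perfectly good alternative to the paper's chain $\hat{L}^s_{\lambda,r} \geq \tilde{L}_{\lambda,r} \geq \bar{L}_1 \geq f + (r-\|\lambda\|)\|h\|$.

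The only wobble is your final paragraph. You do not need to restrict Ekeland to the open set $\mathbbm{R}^n\times(0,\infty)$ --- and indeed you should not, since that open set is not complete and Ekeland's principle requires completeness. The clean resolution (which the paper uses, and which your own sublevel-set remark already contains) is this: extend $\hat{L}^s_{\lambda,r}$ to all of $\mathbbm{R}^n\times\mathbbm{R}$ as a lower semicontinuous function equal to $+\infty$ whenever $t\leq 0$; this is automatic because the barrier term $\frac{r}{2t}s^2$ with $s>0$ blows up at $t=0$. Apply Ekeland on the complete space $\mathbbm{R}^n\times\mathbbm{R}$. The resulting point $(x(s),t(s))$ satisfies $\hat{L}^s_{\lambda,r}(x(s),t(s)) \leq \hat{L}^s_{\lambda,r}(\tilde{x},\tilde{t}) < \infty$, so necessarily $t(s) > 0$, and differentiability there gives the gradient bound. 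Your detour through comparing $\mu$ with $\inf\{f(x):h(x)=0\}$ is unnecessary.
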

\begin{proof}
Let $f \left( x \right) \geq \kappa_f$ and $\left\| h \left( x \right) \right\| \leq \kappa_h$ hold for all $x$. Then, the following holds:
\begin{itemize}
\item If $r \geq \left\| \lambda \right\|$, we have $\left( r - \left\| \lambda \right\| \right) \left\| h \left( x \right) \right\| \geq 0$.
\item If $r < \left\| \lambda \right\|$, then $\left( r - \left\| \lambda \right\| \right) \left\| h \left( x \right) \right\| \geq \left( r - \left\| \lambda \right\| \right) \kappa_h$.
\end{itemize}
Thus, for all $s > 0$ and $\left( x , t \right) \in \mathbbm{R}^n \times \left( 0 , \infty \right)$ we have
\begin{align*}
\hat{L}^s_{\lambda , r} \left( x , t \right) & \geq \tilde{L}_{\lambda , r} \left( x , t \right) \geq \bar{L}_1 \left( x ; \lambda , r \right) \geq f \left( x \right) + \left( r - \left\| \lambda \right\| \right) \left\| h \left( x \right) \right\| \\[2mm]
& \geq \kappa_f + \min \left\{ 0 , \left( r - \left\| \lambda \right\| \right) \kappa_h \right\} .
\end{align*}
Hence, $\hat{L}^s_{\lambda , r}$ is bounded from below. Now, take $\eta > 0$ and $\left( \tilde{x} , \tilde{t} \right)$ such that 
$\hat{L}^s_{\lambda , r} \left( \tilde{x} , \tilde{t} \right) < \inf_{\left( x , t \right)} \hat{L}^s_{\lambda , r} \left( x , t \right) + \eta$. Then, by \cite[Proposition 10.44]{rockafellar2009variational}, there exists $\left( x \left( s \right) , t \left( s \right) \right)$ such that $\hat{L}^s_{\lambda , r} \left( x \left( s \right) , t \left( s \right) \right) \leq \hat{L}^s_{\lambda , r} \left( \tilde{x} , \tilde{t} \right)$ and a vector $v \in \partial \hat{L}^s_{\lambda , r} \left( x \left( s \right) , t \left( s \right) \right)$ with $\left\| v \right\| \leq \tilde{\varepsilon}$. Notice that by the definition of $\hat{L}^s_{\lambda , r}$, it follows that $t \left( s \right) > 0$, implying that $\hat{L}^s_{\lambda , r}$ is continuously differentiable at $\left( x \left( s \right) , t \left( s \right) \right)$. Consequently, we have
\[
\left\| \nabla \hat{L}^s_{\lambda , r} \left( x \left( s \right) , t \left( s \right) \right) \right\| = \left\| v \right\| \leq \tilde{\varepsilon} .
\]
\qed
\end{proof}

\begin{remark} \label{rmk:cutoff}
It is not hard to see that the objective and constraint functions can be redefined to meet the conditions of Proposition \ref{existencia (x(s),t(s))}. For instance, given some $M > 0$, we could define $\tilde{f} \left( x \right) = e^{f \left( x \right)}$ and $\tilde{h}_i \left( x \right) = \max \left\{ - M + \tanh \left( h_i \left( x \right) + M \right) , \min \left\{ h_i \left( x \right) , M + \tanh \left( h_i \left( x \right) - M \right) \right\} \right\}$. Consequently, $\tilde{f}$ is bounded from below, $\tilde{h}$ is bounded, and both functions are twice continuously differentiable.    
\end{remark}

Recall that $\nabla \hat{L}^s_{\lambda , r} \left( x , t \right) = \left( \nabla_x  
\hat{L}^s_{\lambda , r} \left( x , t \right) , \frac{\partial \hat{L}^s_{\lambda , r}}{\partial t} \left( x , t \right) \right)$ where
\begin{align}
\nabla_x \hat{L}^s_{\lambda , r} \left( x , t \right) & = \nabla_x \tilde{L}_{\lambda , r} \left( x , t \right) = \nabla f \left( x \right) + \nabla h \left( x \right) \left( \lambda + \frac{r}{t} h \left( x \right) \right) , \label{dhatL_dx} \\[2mm]
\frac{\partial \hat{L}^s_{\lambda , r}}{\partial t} \left( x , t \right) & = \frac{r}{2} \left( 1 - \frac{\left\| h \left( x \right) \right\|^2 + s^2}{t^2} \right) . \label{dhatL_dt}  
\end{align}
These expressions will be useful in the forthcoming calculations.

\subsection{Fixing the smoothing parameter}
According to Proposition \ref{existencia (x(s),t(s))}, there exists $\left( x \left( s \right) , t \left( s \right) \right)$, an inexact stationary point  of $\hat{L}^s_{\lambda , r}$. To compute such a point, note that, as indicated by \eqref{dhatL_dt}, the stationary point of the function $t \mapsto \hat{L}^s_{\lambda , r} \left( x , t \right)$, and hence a global minimizer due to convexity, must satisfy
\[
t = \sqrt{\left\| h \left( x \right) \right\|^2 + s^2} .
\]
We propose a coordinate descent--like strategy to find this point. In the $k$--th iteration, we define $t_{k + 1}$ as the minimizer of $t \mapsto \hat{L}^{s_k}_{\lambda^k , r_k} \left( x^k , t \right)$, and $x^{k + 1}$ as an inexact stationary point of the problem minimizing $x \mapsto \hat{L}^{s_k}_{\lambda^k , r_k} \left( x , t_{k + 1} \right)$. The existence of such $x^{k + 1}$ is guaranteed by similar arguments to those in Proposition \ref{existencia (x(s),t(s))}. Based on this approach, we define Algorithm \ref{SL Inex con Punto Interior}.

\begin{algorithm}
\caption{Inexact sharp Lagrangian (fixed smoothing parameter)} \label{SL Inex con Punto Interior}
\begin{algorithmic}
\vspace{2mm}
\State \textbf{Step 0:} {\it Initialization} 
     
\vspace{1mm}
\noindent Take $\lambda_{\min} < \lambda_{\max}$, $0 < \tau < 1$, $\gamma > 1$, and $tol > 0$. Choose $x^0 \in \mathbbm{R}^n$, $t_0 > 0$, $\bar{\lambda}^0 \in \left[ \lambda_{\min} , \lambda_{\max} \right]^m$, and $r_0 > 0$. Consider sequences of exogenous parameters $\varepsilon_k \searrow 0$ and $\left\{ s_k \right\} \subset \left( 0 , \infty \right)$ that are bounded. 

\noindent Set $k := 0$.

\vspace{2mm}
\State \textbf{Step 1:} {\it Checking stopping criterion} 

\vspace{1mm}
\[
\text{If } \sqrt{ \left\| \nabla_x \tilde{L}_{\bar{\lambda}^k , r_k} \left( x^k , t_k \right) \right\|^2 + \left\| h \left( x^k \right) \right\|^2} \leq tol , \text{ then STOP} .
\]
    
\vspace{2mm}
\State \textbf{Step 2:} {\it Updating primal variables}

\vspace{1mm}
\noindent Define
\[
t_{k + 1} = \sqrt{\left\| h \left( x^k \right) \right\|^2 + s_k^2} .
\]
     
\noindent Find $x^{k + 1} \in \mathbbm{R}^n$ such that
\[
\left\| \nabla_x \tilde{L}_{\bar{\lambda}^k , r_k} \left( x^{k + 1} , t_{k + 1} \right)  \right\| \leq \varepsilon_k . 
\]

\vspace{2mm}
\State \textbf{Step 3:} {\it Updating Lagrangian multipliers}

\vspace{1mm}
\noindent Define 
\[
\lambda^{k + 1} = \bar{\lambda}^k + r_k \frac{h \left( x^{k + 1} \right)}{t_{k + 1}} . 
\]

\vspace{2mm}
\State \textbf{Step 4:} {\it Updating penalty parameter}

\vspace{1mm}
\noindent If $\left\| h \left( x^{k + 1} \right) \right\| \leq \tau \left\| h \left( x^k \right) \right\|$, choose $r_{k + 1} = r_k$. Otherwise, define $r_{k + 1} = \gamma r_k$.

\vspace{2mm} 
\State \textbf{Step 5:} {\it Projecting Lagrangian multipliers}
     
\vspace{1mm}
\noindent Compute $\bar{\lambda}^{k + 1} \in \left[ \lambda_{\min} , \lambda_{\max} \right]^m$.
 
\noindent Set $k := k + 1$ and go to Step 1.     
\end{algorithmic}
\end{algorithm}

The next proposition shows that every limit point of the sequence generated by Algorithm \ref{SL Inex con Punto Interior} is either a stationary point of problem \eqref{eq:nlp} or a stationary point of the problem that minimizes infeasibility.

\begin{proposition}
Let $\bar{x} \in \mathbbm{R}^n$ be a limit point of the sequence $\left\{ x^k \right\}$ generated by Algorithm \ref{SL Inex con Punto Interior}. Then:
\begin{enumerate}[label=(\alph*)]
\item If $\left\{ r_k \right\}$ is bounded, then $\bar{x}$ is a stationary point of problem \eqref{eq:nlp}.
\item If $\left\{ r_k \right\}$ is unbounded and $\left\{ h \left( x^k \right) \right\}$ is bounded, then $\bar{x}$ is a stationary point of the unconstrained problem of minimizing $\left\| h \left( x \right) \right\|^2$.
%\item[ii)] If ${r_k}$ is unbounded, $\bar{x}$ is a stationary point of the unconstrained problem of minimizing $\|h(x)\|^2$.
\end{enumerate}
\end{proposition}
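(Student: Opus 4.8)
The plan is to pass to the limit, along a convergent subsequence, in the inexact stationarity condition generated at Step~2. By \eqref{dhatL_dx}, the requirement $\|\nabla_x\tilde L_{\bar\lambda^k,r_k}(x^{k+1},t_{k+1})\|\le\varepsilon_k$ can be written
\[
\bigl\|\nabla f(x^{k+1})+\nabla h(x^{k+1})\,\lambda^{k+1}\bigr\|\le\varepsilon_k ,\qquad \lambda^{k+1}=\bar\lambda^k+\frac{r_k}{t_{k+1}}\,h(x^{k+1}),
\]
with $t_{k+1}=\sqrt{\|h(x^k)\|^2+s_k^2}$. I will use that $\{\bar\lambda^k\}\subset[\lambda_{\min},\lambda_{\max}]^m$ is bounded, that $\varepsilon_k\searrow0$, that $\nabla f$ and $\nabla h$ are continuous, and that $\{r_k\}$ is nondecreasing by Step~4, so that ``$\{r_k\}$ bounded'' means ``$\{r_k\}$ eventually constant'' and ``$\{r_k\}$ unbounded'' means $r_k\to\infty$. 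Assuming the algorithm does not stop at Step~1, fix an infinite set $K$ with $x^{k+1}\to\bar x$ as $k\to\infty$ in $K$ (limit points of $\{x^k\}$ and of $\{x^{k+1}\}$ coincide).

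For part~(a), since each increase multiplies the penalty parameter by $\gamma>1$, boundedness of $\{r_k\}$ forces it to be increased only finitely often, so there is $k_0$ with $r_{k+1}=r_k=:\bar r$ and, by Step~4, $\|h(x^{k+1})\|\le\tau\|h(x^k)\|$ for all $k\ge k_0$; since $\tau<1$ this yields $h(x^k)\to0$, hence $h(\bar x)=0$ by continuity. I then show $\{\lambda^{k+1}\}_{k\ge k_0}$ is bounded: if $h(x^k)\neq0$ then $t_{k+1}\ge\|h(x^k)\|\ge\tau^{-1}\|h(x^{k+1})\|$, so $\|h(x^{k+1})\|/t_{k+1}\le\tau$, while if $h(x^k)=0$ the decrease test forces $h(x^{k+1})=0$; in both cases $\|\lambda^{k+1}-\bar\lambda^k\|=\bar r\,\|h(x^{k+1})\|/t_{k+1}\le\bar r\tau$. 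Passing to a further subsequence along which $\lambda^{k+1}\to\bar\lambda$ and letting $k\to\infty$ in the displayed inequality gives $\nabla f(\bar x)+\nabla h(\bar x)\bar\lambda=0$, which together with $h(\bar x)=0$ is exactly \eqref{eq:lagrange}; hence $\bar x$ is a stationary point of \eqref{eq:nlp}.

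For part~(b), now $r_k\to\infty$, and boundedness of $\{h(x^k)\}$ and $\{s_k\}$ yields an upper bound $t_{k+1}\le t_{\max}$, so that $t_{k+1}/r_k\to0$. Multiplying the stationarity inequality by $t_{k+1}/r_k>0$ gives
\[
\Bigl\|\tfrac{t_{k+1}}{r_k}\bigl(\nabla f(x^{k+1})+\nabla h(x^{k+1})\,\bar\lambda^k\bigr)+\nabla h(x^{k+1})\,h(x^{k+1})\Bigr\|\le\tfrac{t_{k+1}}{r_k}\,\varepsilon_k .
\]
Letting $k\to\infty$ in $K$, the prefactor $t_{k+1}/r_k$ tends to $0$ while $\nabla f(x^{k+1})$, $\nabla h(x^{k+1})$ and $\bar\lambda^k$ stay bounded, so the first term vanishes; the right-hand side tends to $0$; and $\nabla h(x^{k+1})\,h(x^{k+1})\to\nabla h(\bar x)\,h(\bar x)$ by continuity. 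Therefore $\nabla h(\bar x)\,h(\bar x)=0$, i.e. $\tfrac12\nabla_x\|h(\bar x)\|^2=0$, which is precisely the stationarity of $\bar x$ for the problem $\min_x\|h(x)\|^2$.

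The only delicate point is the boundedness of the multiplier estimates $\{\lambda^{k+1}\}$ in part~(a): it does not follow from $h(x^k)\to0$ alone, since $\{s_k\}$ is not assumed bounded away from $0$ and $t_{k+1}$ could also tend to $0$; the argument instead combines the elementary bound $t_{k+1}\ge\|h(x^k)\|$ with the sufficient-decrease test $\|h(x^{k+1})\|\le\tau\|h(x^k)\|$ that is in force once the penalty parameter has stabilized. Everything else is a routine passage to the limit.
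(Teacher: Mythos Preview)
Your proof is correct and follows essentially the same approach as the paper: in part~(a) you use the eventual constancy of $r_k$ and the decrease test to bound $\|h(x^{k+1})\|/t_{k+1}\le\tau$ (the paper does the same computation without splitting into the case $h(x^k)=0$), then pass to the limit in the inexact stationarity condition; in part~(b) you multiply by $t_{k+1}/r_k$ and take limits, exactly as in the paper. The only cosmetic difference is that you phrase the limit multiplier as $\bar\lambda=\lim\lambda^{k+1}$, whereas the paper writes it as $\bar\lambda+r_{k_0}\xi$ with $\bar\lambda$ a limit point of $\{\bar\lambda^k\}$ and $\xi$ a limit point of $\{h(x^{k+1})/t_{k+1}\}$; these are equivalent since $\lambda^{k+1}=\bar\lambda^k+r_k\,h(x^{k+1})/t_{k+1}$.
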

\begin{proof}
Let $\bar{x}$ be a limit point of the sequence $\left\{ x^k \right\}$ generated by Algorithm \ref{SL Inex con Punto Interior}. Then there exists an infinite subset $\mathcal{K} \subset \mathbbm{N}$ such that $\lim_{\underset{k \in \mathcal{K}}{k \to \infty}} x^{k + 1} = \bar{x}$. With this in mind, we consider two cases.

\

\noindent (a) If $\left\{ r_k \right\}$ is bounded, then by Step 4 of Algorithm \ref{SL Inex con Punto Interior}, there exists $k_0 \in \mathbbm{N}$ such that $r_k = r_{k_0}$ and $\left\| h \left( x^{k + 1} \right) \right\| \leq \tau \left\| h \left( x^k \right) \right\|$ for all $k \geq k_0$. Therefore, for all $k \geq k_0$,
\[
\frac{\left\| h \left( x^{k + 1} \right) \right\|}{t_{k + 1}} 
 = \frac{\left\| h \left( x^{k + 1} \right) \right\|}{\sqrt{\left\| h \left( x^k \right) \right\|^2 + s_k^2}} \leq \frac{\tau \left\| h \left( x^k \right) \right\|}{\sqrt{\left\| h \left( x^k \right) \right\|^2 + s_k^2}} \leq \tau .
\]

Taking the limit over a suitable subsequence, from the inequality in Step 2 of Algorithm \ref{SL Inex con Punto Interior}, we have
\[
\nabla f \left( \bar{x} \right) + \nabla h \left( \bar{x} \right) \left( \bar{\lambda} +  r_{k_0} \xi \right) = 0 .
\]
for some $\xi$, which is a limit point of $\left\{ h \left( x^{k + 1} \right) / t_{k + 1} \right\}_{k \in \mathcal{K}}$, and for some $\bar{\lambda}$, a limit point of $\left\{ \bar{\lambda}^k \right\}_{k \in \mathcal{K}}$. 
%Notar que $\|\xi\|\leq 1$.
    
Furthermore, by Step 4 of Algorithm \ref{SL Inex con Punto Interior}, we have that $h \left( \bar{x} \right) = 0$, indicating that $\bar{x}$ is a feasible point for problem \eqref{eq:nlp}. Therefore, $\bar{x}$ is a stationary point for this problem.

\

\noindent (b) If $\left\{ r_k \right\}$ is unbounded and $\left\{ h \left( x^k \right) \right\}$ is bounded, then $1 / r_k \to 0$ and $t_{k + 1}$ is bounded (since $\left\{ s_k \right\}$ is bounded).
    
On the other hand, if we multiply both sides of the inequality in Step 2 of Algorithm \ref{SL Inex con Punto Interior} by $t_{k + 1} / r_k$, we obtain
\[
\left\| \frac{t_{k + 1}}{r_k} \nabla f \left( x^{k + 1} \right) + \nabla h \left( x^{k + 1} \right) \left[ \frac{t_{k + 1}}{r_k} \bar{\lambda}^k + h \left( x^{k + 1} \right) \right] \right\| \leq \frac{t_{k + 1}}{r_k} \varepsilon_k . 
\]
Taking limits as $k \in \mathcal{K}$, we get 
\[
\nabla h \left( \bar{x} \right) h \left( \bar{x} \right) = 0 .
\] 
Consequently, $\bar{x}$ is a stationary point for the problem of minimizing $\left\| h \left( x \right) \right\|^2$.
    
\qed
\end{proof}

The boundedness condition of $\left\{ h \left( x^k \right) \right\}$ in item (b) can be ensured by using a cutoff function, as described in Remark \ref{rmk:cutoff}.

\subsection{Varying the smoothing parameter}
In the previous section, we decoupled the computation of $\left( x^{k + 1} , t_{k + 1} \right)$. As a result, this pair is not necessarily an inexact stationary point of $\hat{L}^s_{\bar{\lambda}^k , r_k}$. In this section, however, we will treat this point as an inexact stationary point of $\hat{L}^s_{\bar{\lambda}^k , r_k}$. By doing so, we will obtain results similar to those in the aforementioned section.

\begin{algorithm}
\caption{Inexact sharp Lagrangian (varying the smoothing parameter)} \label{SL Inex dos variables}
\begin{algorithmic}
\vspace{2mm}
\State \textbf{Step 0:} {\it Initialization} 
     
\vspace{1mm}
\noindent Take $\lambda_{\min} < \lambda_{\max}$, $0 < \tau < 1$, $\gamma > 1$, $tol > 0$. Choose $x^0 \in \mathbbm{R}^n$, $t_0 > 0$, $\bar{\lambda}^0 \in \left[ \lambda_{\min} , \lambda_{\max} \right]^m$, $r_0 > 0$. Take sequences of exogenous parameters $\varepsilon_k \searrow 0$ and $\left\{ s_k \right\} \subset \left( 0 , \infty \right)$ bounded. 

\noindent Set $k := 0$.
     
\vspace{2mm}
\State \textbf{Step 1:} {\it Checking stopping criterion} 

\vspace{1mm}
\noindent 
\[
\text{If } \sqrt{\left\| \nabla_x \tilde{L}_{\bar{\lambda}^k , r_k} \left( x^k , t_k \right) \right\|^2 + \left\| h \left( x^k \right) \right\|^2} \leq tol , \text{ then STOP} .
\]

\vspace{2mm}
\State \textbf{Step 2:} {\it Updating primal variables}

\vspace{1mm}
\noindent Find $\left( x^{k + 1} , t_{k + 1} \right) \in \mathbbm{R}^n \times \mathbbm{R}_{++}$ such that
\[
\left\| \nabla \hat{L}^{s_k}_{\bar{\lambda}^k , r_k} \left( x^{k + 1} , t_{k + 1} \right) \right\| \leq \varepsilon_k , 
\]

\vspace{2mm}
\State \textbf{Step 3:} {\it Updating Lagrangian multipliers}

\vspace{1mm}
\noindent Define 
\[
\lambda^{k + 1} = \bar{\lambda}^k + r_k \frac{h \left( x^{k + 1} \right)}{t_{k + 1}} . 
\]

\vspace{2mm}
\State \textbf{Step 4:} {\it Updating penalty parameter}

\vspace{1mm}
\noindent If $\left\| h \left( x^{k + 1} \right) \right\| \leq \tau \left\| h \left( x^k \right) \right\|$, choose $r_{k + 1} = r_k$. Otherwise, define $r_{k + 1} = \gamma r_k$.

\vspace{2mm} 
\State \textbf{Step 5:} {\it Projecting Lagrangian multipliers}
     
\vspace{1mm}
\noindent Compute $\bar{\lambda}^{k + 1} \in \left[ \lambda_{\min} , \lambda_{\max} \right]^m$.
    
\noindent Set $k := k + 1$ and go to Step 1.     

\end{algorithmic}
\end{algorithm}

\begin{remark} \label{h_k/t_k y t_k acotadas}
From Step 2 of Algorithm \ref{SL Inex dos variables} it can be deduced that the sequence $\left\{ \left\| h \left( x^k \right) \right\| / t_k \right\}$ is bounded and also that $\left\{ t_k \right\}$ is bounded if the sequence $\left\{ h \left( x^k \right) \right\}$ is bounded. Indeed, from \eqref{dhatL_dt}, we have
\[
\left| \frac{\partial \hat{L}^{s_k}_{\lambda^k , r_k}}{\partial t} \left( x^{k + 1} , t_{k + 1} \right) \right| \leq \varepsilon_k ,
\]
which holds if and only if
\[
1 - \frac{2 \varepsilon_k}{r_k} \leq \frac{\left\| h \left( x^{k + 1} \right) \right\|^2 + s_k^2}{t_{k + 1}^2} \leq 1 + \frac{2 \varepsilon_k}{r_k} .
\]
Thus,
\[
\frac{\left\| h \left( x^{k + 1} \right) \right\|}{t_{k + 1}} \leq \sqrt{1 + \frac{2 \varepsilon_k}{r_k}} , \qquad t_{k+1} \leq \sqrt{\frac{\left\| h \left( x^{k + 1} \right) \right\|^2 + s_k^2}{1 - \frac{2 \varepsilon_k}{r_k}}} .
\]
\end{remark}

The following proposition shows that every limit point of the sequence generated by Algorithm \ref{SL Inex dos variables} is either a stationary point of problem \eqref{eq:nlp} or a stationary point of the problem that minimizes infeasibility.

\begin{proposition}
Let $\bar{x} \in \mathbbm{R}^n$ be a limit point of the sequence $\left\{ x^k \right\}$ generated by Algorithm \ref{SL Inex dos variables}. Then:
\begin{enumerate}[label=(\alph*)]
\item If $\left\{ r_k \right\}$ is bounded, then $\bar{x}$ is a stationary point of problem \eqref{eq:nlp}.
\item If $\left\{ r_k \right\}$ is unbounded, then $\bar{x}$ is a stationary point of the unconstrained problem of minimizing $\left\| h \left( x \right) \right\|^2$.
\end{enumerate}
\end{proposition}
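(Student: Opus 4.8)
The plan is to mirror the proof of the analogous proposition for Algorithm \ref{SL Inex con Punto Interior}, but now exploiting the fact that $\left(x^{k+1},t_{k+1}\right)$ is an \emph{inexact stationary point of the full function} $\hat{L}^{s_k}_{\bar\lambda^k,r_k}$, which means we control both partial derivatives \eqref{dhatL_dx} and \eqref{dhatL_dt} simultaneously. Throughout, fix an infinite subset $\mathcal{K}\subset\mathbbm{N}$ with $x^{k+1}\to\bar x$ as $k\to\infty$, $k\in\mathcal{K}$. The key structural input is Remark \ref{h_k/t_k y t_k acotadas}: from the bound on $\partial\hat{L}^{s_k}_{\bar\lambda^k,r_k}/\partial t$ we know $\left\|h\left(x^{k+1}\right)\right\|/t_{k+1}\le\sqrt{1+2\varepsilon_k/r_k}$, so the sequence $\left\{h\left(x^{k+1}\right)/t_{k+1}\right\}$ is bounded and has a convergent subsequence; refine $\mathcal{K}$ so that $h\left(x^{k+1}\right)/t_{k+1}\to\xi$ and $\bar\lambda^k\to\bar\lambda\in\left[\lambda_{\min},\lambda_{\max}\right]^m$ (possible since the projected multipliers lie in a compact box).

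For part (a), assume $\left\{r_k\right\}$ is bounded. Then by Step 4 it is eventually constant, say $r_k=r_{k_0}$ for $k\ge k_0$, and $\left\|h\left(x^{k+1}\right)\right\|\le\tau\left\|h\left(x^k\right)\right\|$ for all such $k$, so $\left\|h\left(x^k\right)\right\|\to 0$ and hence $h\left(\bar x\right)=0$, i.e.\ $\bar x$ is feasible. Moreover $t_{k+1}=\sqrt{\left\|h\left(x^{k+1}\right)\right\|^2+s_k^2}$ is not quite available here (that was Algorithm 2's definition), so instead I use Remark \ref{h_k/t_k y t_k acotadas} directly: since $\left\|h\left(x^{k+1}\right)\right\|\to 0$ and $t_{k+1}^2\approx\left\|h\left(x^{k+1}\right)\right\|^2+s_k^2$ up to the factor $1\pm 2\varepsilon_k/r_k$, the ratio $\left\|h\left(x^{k+1}\right)\right\|/t_{k+1}\to 0$, so in fact $\xi=0$. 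Passing to the limit along $\mathcal{K}$ in \eqref{dhatL_dx}, namely $\left\|\nabla f\left(x^{k+1}\right)+\nabla h\left(x^{k+1}\right)\left(\bar\lambda^k+\frac{r_k}{t_{k+1}}h\left(x^{k+1}\right)\right)\right\|\le\varepsilon_k$, and using continuity of $\nabla f,\nabla h$ together with $\varepsilon_k\to 0$, gives $\nabla f\left(\bar x\right)+\nabla h\left(\bar x\right)\bar\lambda=0$. Combined with $h\left(\bar x\right)=0$ this is exactly \eqref{eq:lagrange}, so $\bar x$ is a stationary point of \eqref{eq:nlp} with multiplier $\bar\lambda$. (One may alternatively keep a nonzero $\xi$ and absorb $r_{k_0}\xi$ into the multiplier, as in the Algorithm 2 proof; either phrasing works.)

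For part (b), assume $\left\{r_k\right\}$ is unbounded; since it is nondecreasing (Step 4 multiplies by $\gamma>1$ or leaves it fixed), $r_k\to\infty$, so $1/r_k\to 0$ and $\varepsilon_k/r_k\to 0$. Multiply the inequality $\left\|\nabla_x\tilde L_{\bar\lambda^k,r_k}\left(x^{k+1},t_{k+1}\right)\right\|\le\varepsilon_k$ through by $t_{k+1}/r_k$, obtaining
\[
\left\|\frac{t_{k+1}}{r_k}\nabla f\left(x^{k+1}\right)+\nabla h\left(x^{k+1}\right)\left[\frac{t_{k+1}}{r_k}\bar\lambda^k+h\left(x^{k+1}\right)\right]\right\|\le\frac{t_{k+1}}{r_k}\varepsilon_k .
\]
Here the only subtlety, and the main obstacle, is controlling $t_{k+1}/r_k$: I need $t_{k+1}$ to stay bounded so that $t_{k+1}/r_k\to 0$. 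Remark \ref{h_k/t_k y t_k acotadas} gives $t_{k+1}\le\sqrt{\left(\left\|h\left(x^{k+1}\right)\right\|^2+s_k^2\right)/\left(1-2\varepsilon_k/r_k\right)}$; since $x^{k+1}\to\bar x$ along $\mathcal{K}$, $\left\|h\left(x^{k+1}\right)\right\|\to\left\|h\left(\bar x\right)\right\|$ is bounded on $\mathcal{K}$, and $\left\{s_k\right\}$ is bounded by hypothesis, so $\left\{t_{k+1}\right\}_{k\in\mathcal{K}}$ is bounded and $t_{k+1}/r_k\to 0$. (Note that, unlike Algorithm 2's proposition, no separate "$\left\{h\left(x^k\right)\right\}$ bounded" assumption is needed, because convergence of $x^{k+1}$ along $\mathcal{K}$ already bounds $h\left(x^{k+1}\right)$ on $\mathcal{K}$ — this is why the statement of part (b) here is cleaner.) Letting $k\to\infty$ in $\mathcal{K}$, the first term and the $\frac{t_{k+1}}{r_k}\bar\lambda^k$ piece vanish, and by continuity the left side tends to $\left\|\nabla h\left(\bar x\right)h\left(\bar x\right)\right\|$, while the right side tends to $0$. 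Hence $\nabla h\left(\bar x\right)h\left(\bar x\right)=0$, which is precisely the stationarity condition $\tfrac12\nabla\left\|h\left(\bar x\right)\right\|^2=0$ for the problem of minimizing $\left\|h\left(x\right)\right\|^2$, completing the proof. $\qed$
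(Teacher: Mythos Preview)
Your proof follows the same overall approach as the paper's, and part (b) is essentially identical to the paper's argument. There is, however, one genuine error in your primary argument for part (a): the claim that $\xi=0$ is not justified. You argue that because $\left\|h\left(x^{k+1}\right)\right\|\to 0$ and $t_{k+1}^2\approx\left\|h\left(x^{k+1}\right)\right\|^2+s_k^2$, the ratio $\left\|h\left(x^{k+1}\right)\right\|/t_{k+1}$ must vanish. But the algorithm only assumes $\left\{s_k\right\}$ is \emph{bounded}, not bounded away from zero; if $s_k\to 0$ at a rate comparable to $\left\|h\left(x^{k+1}\right)\right\|$ (as indeed happens under the hypothesis $s_k=O\left(\left\|h\left(x^k\right)\right\|\right)$ used later in Section \ref{boundingpenaltyparameter}), the ratio can stay bounded away from zero and $\xi$ need not be $0$.

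Fortunately, your parenthetical alternative---keeping a possibly nonzero $\xi$ and absorbing $r_{k_0}\xi$ into the multiplier to get $\nabla f\left(\bar x\right)+\nabla h\left(\bar x\right)\left(\bar\lambda+r_{k_0}\xi\right)=0$---is exactly what the paper does and is the correct route. So the fix is simply to drop the $\xi=0$ claim and promote the parenthetical to the main argument.
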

\begin{proof}
Let $\bar{x}$ be a limit point of the sequence $\left\{ x^k \right\}$ generated by Algorithm \ref{SL Inex dos variables}. Then there exists an infinite subset $\mathcal{K} \subset \mathbbm{N}$ such that $\lim_{\underset{k \in \mathcal{K}}{k \to \infty}} x^{k + 1} = \bar{x}$. With this in mind, we consider two cases. 

\

\noindent (a) Suppose $\left\{ r_k \right\}$ is bounded. Then, by Step 4 of Algorithm \ref{SL Inex dos variables}, there exists $k_0 \in \mathbbm{N}$ such that $r_k = r_{k_0}$ and $\left\| h \left( x^{k + 1} \right) \right\| \leq \tau \left\| h \left( x^k \right) \right\|$ for all $k \geq k_0$. Consequently, $h \left( \bar{x} \right) = 0$, which means that $\bar{x}$ is a feasible point for problem \eqref{eq:nlp}.
    
On the other hand, by Remark \ref{h_k/t_k y t_k acotadas}, the sequence $\left\{ h \left( x^{k + 1} \right) / t_{k + 1} \right\}_{k \in \mathcal{K}}$ is bounded. Taking the limit over a suitable subsequence, from the inequality in Step 2 of Algorithm \ref{SL Inex dos variables}, we obtain 
\[
\nabla f \left( \bar{x} \right) + \nabla h \left( \bar{x} \right) \left( \bar{\lambda} + r_{k_0} \xi \right) = 0 .
\]
for some $\xi$, which is a limit point of $\left\{ h \left( x^{k + 1} \right) / t_{k + 1} \right\}_{k \in \mathcal{K}}$, and for some $\bar{\lambda}$, a limit point of $\left\{ \bar{\lambda}^k \right\}_{k \in \mathcal{K}}$. 
    
Therefore, $\bar{x}$ is a stationary point for the problem of minimizing $f \left( x \right)$ subject to $h \left( x \right) = 0$.

\

\noindent (b) Suppose $\left\{ r_k \right\}$ is unbounded. Since $\left\{ h \left( x^{k + 1} \right) \right\}_{k \in \mathcal{K}}$ is bounded (because $\left\{ x^{k + 1} \right\}_{k \in \mathcal{K}}$ is convergent and $h$ is a continuous function), it follows from Remark \ref{h_k/t_k y t_k acotadas} that $\left\{ t_{k + 1} \right\}_{k \in \mathcal{K}}$ is also bounded.
    
On the other hand, if we multiply both sides of the inequality in Step 2 of Algorithm \ref{SL Inex dos variables} by $t_{k + 1} / r_k$ we obtain
\[
\left\| \frac{t_{k + 1}}{r_k} \nabla f \left( x^{k + 1} \right) + \nabla h \left( x^{k + 1} \right) \left[ \frac{t_{k + 1}}{r_k} \bar{\lambda}^k + h \left( x^{k + 1} \right) \right] \right\| \leq \frac{t_{k + 1}}{r_k} \varepsilon_k . 
\]
Taking limits as $k \in \mathcal{K}$, we get
\[
\nabla h \left( \bar{x} \right) h \left( \bar{x} \right) = 0 .
\] 
Consequently, $\bar{x}$ is a stationary point for the problem of minimizing $\left\| h \left( x \right) \right\|^2$.

\qed
\end{proof}

\section{Boundedness of the penalty parameters} \label{boundingpenaltyparameter}
We have observed that, in both Algorithm \ref{SL Inex con Punto Interior} and Algorithm \ref{SL Inex dos variables}, if the sequence of penalty parameters is bounded, the limit points of the sequence generated by these algorithms are stationary points of the primal problem. This raises the question of when it is possible to guarantee such a bound, that is, under what conditions the sequence of penalty parameters remains bounded. This issue is of interest not only from a theoretical point of view but also from a computational one, since when the penalty parameters are excessively large, the subproblems tend to be ill--conditioned, making their solution more difficult. More generally, the study of conditions that ensure the bounding of penalty parameters is fundamental in approaches based on augmented Lagrangians (see \cite{andreani2007augmented,fernandez2012local,birgin2014practical}).

Henceforth, we will assume that the following hypotheses hold.
\begin{enumerate}[label=(\textit{H\arabic*}), ref=\arabic*, leftmargin=*]
\item \label{hip:1} The sequences $\left\{ x^k \right\}$, $\left\{ \lambda^k \right\}$ generated by the application of Algorithm \ref{SL Inex con Punto Interior} or Algorithm \ref{SL Inex dos variables} satisfy the following conditions: $\lim_{k \to \infty} x^k = \bar{x}$ and $\bar{\lambda}^{k + 1} = P_{\left[ \lambda_{\min} , \lambda_{\max} \right]^m} \left( \lambda^{k + 1} \right)$, orthogonal projection onto $\left[ \lambda_{\min} , \lambda_{\max} \right]^m$.
\item \label{hip:2} The point $\bar{x}$ is feasible (that is, $h \left( \bar{x} \right) = 0$).
\item \label{hip:3} At the point $\bar{x}$, the LICQ condition is satisfied, meaning that the gradients $\nabla h_i \left( \bar{x} \right)$, $i = 1 , \ldots , m$ are linearly independent.
\item \label{hip:4} The second--order sufficient optimality condition is satisfied at $\bar{x}$, with Lagrange multiplier $\bar{\lambda} \in \mathbbm{R}^m$.
\item \label{hip:5} It is satisfied that $\bar{\lambda} \in \left[ \lambda_{\min} , \lambda_{\max} \right]^m$.
\end{enumerate}

\begin{remark} \label{El multiplicador es único}
Note that, due to Hypothesis (\textit{H\ref{hip:3}}), the Lagrange multiplier mentioned in Hypothesis (\textit{H\ref{hip:4}}) is unique.
\end{remark}

\begin{proposition}
Suppose that Hypotheses (\textit{H\ref{hip:1}}), (\textit{H\ref{hip:2}}), (\textit{H\ref{hip:3}}) and (\textit{H\ref{hip:5}}) hold. Then $\lim_{k \to \infty} \bar{\lambda}^k = \bar{\lambda}$.
\end{proposition}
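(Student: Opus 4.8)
The plan is to show that $\{\lambda^k\}$ converges to the (unique, by Remark~\ref{El multiplicador es único}) Lagrange multiplier $\bar\lambda$, and then use the continuity of the projection to conclude $\bar\lambda^k \to P_{[\lambda_{\min},\lambda_{\max}]^m}(\bar\lambda) = \bar\lambda$, where the last equality is precisely Hypothesis~(\textit{H\ref{hip:5}}). The starting point is the stationarity inequality produced at Step~2 of the algorithm in use: in both Algorithm~\ref{SL Inex con Punto Interior} and Algorithm~\ref{SL Inex dos variables} one has, using \eqref{dhatL_dx} and the multiplier update of Step~3,
\[
\left\| \nabla f\left(x^{k+1}\right) + \nabla h\left(x^{k+1}\right) \lambda^{k+1} \right\| \leq \varepsilon_k .
\]
Since $x^k \to \bar x$ by (\textit{H\ref{hip:1}}), $h\left(\bar x\right)=0$ by (\textit{H\ref{hip:2}}), and $\varepsilon_k \searrow 0$, passing to a limit along any subsequence of $\{\lambda^k\}$ that converges (or diverges in direction) will force a limiting relation of the form $\nabla f\left(\bar x\right) + \nabla h\left(\bar x\right)\mu = 0$.

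First I would establish that $\{\lambda^k\}$ is bounded. Suppose not; then along a subsequence $\|\lambda^{k+1}\| \to \infty$, and dividing the displayed inequality by $\|\lambda^{k+1}\|$ and taking limits yields $\nabla h\left(\bar x\right)\,\nu = 0$ for some unit vector $\nu$, contradicting the linear independence of the columns of $\nabla h\left(\bar x\right)$ guaranteed by the LICQ Hypothesis~(\textit{H\ref{hip:3}}). Hence $\{\lambda^k\}$ is bounded. Next, let $\mu$ be any limit point of $\{\lambda^k\}$, say $\lambda^{k+1} \to \mu$ along $k \in \mathcal{K}$. Taking limits in the displayed inequality over $k \in \mathcal{K}$ gives $\nabla f\left(\bar x\right) + \nabla h\left(\bar x\right)\mu = 0$; that is, $\mu$ is a Lagrange multiplier associated with the feasible point $\bar x$. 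By (\textit{H\ref{hip:3}}) the Lagrange multiplier is unique (Remark~\ref{El multiplicador es único}), so $\mu = \bar\lambda$. A bounded sequence all of whose limit points equal $\bar\lambda$ converges to $\bar\lambda$, so $\lambda^k \to \bar\lambda$.

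Finally, because $\bar\lambda^{k+1} = P_{[\lambda_{\min},\lambda_{\max}]^m}(\lambda^{k+1})$ and the Euclidean projection onto a box is (Lipschitz) continuous, $\bar\lambda^{k+1} \to P_{[\lambda_{\min},\lambda_{\max}]^m}(\bar\lambda)$; by Hypothesis~(\textit{H\ref{hip:5}}), $\bar\lambda \in [\lambda_{\min},\lambda_{\max}]^m$, so the projection fixes it and $\bar\lambda^{k+1}\to\bar\lambda$, i.e. $\lim_{k\to\infty}\bar\lambda^k = \bar\lambda$.

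The main obstacle I anticipate is purely bookkeeping: making sure the inequality
$\|\nabla f(x^{k+1}) + \nabla h(x^{k+1})\lambda^{k+1}\| \le \varepsilon_k$
is legitimately available in \emph{both} algorithms. For Algorithm~\ref{SL Inex dos variables} it is immediate from the $x$-component of $\|\nabla \hat L^{s_k}_{\bar\lambda^k,r_k}(x^{k+1},t_{k+1})\| \le \varepsilon_k$ together with \eqref{dhatL_dx} and the definition of $\lambda^{k+1}$. For Algorithm~\ref{SL Inex con Punto Interior} it follows from the Step~2 condition $\|\nabla_x \tilde L_{\bar\lambda^k,r_k}(x^{k+1},t_{k+1})\| \le \varepsilon_k$, since $\nabla_x \tilde L_{\bar\lambda^k,r_k}(x^{k+1},t_{k+1}) = \nabla f(x^{k+1}) + \nabla h(x^{k+1})(\bar\lambda^k + r_k h(x^{k+1})/t_{k+1}) = \nabla f(x^{k+1}) + \nabla h(x^{k+1})\lambda^{k+1}$. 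Beyond that, the argument is the standard LICQ-plus-convergence bootstrap and presents no real difficulty; note in particular that this proposition does not even need the second-order Hypothesis~(\textit{H\ref{hip:4}}) nor any boundedness assumption on $\{r_k\}$.
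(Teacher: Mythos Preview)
Your proposal is correct and follows essentially the same route as the paper's proof: derive the key inequality $\|\nabla f(x^{k+1}) + \nabla h(x^{k+1})\lambda^{k+1}\| \le \varepsilon_k$ from Step~2 and the multiplier update, use LICQ to force boundedness of $\{\lambda^k\}$ by the divide-by-$\|\lambda^k\|$ argument, identify every limit point with the unique multiplier $\bar\lambda$, and conclude via continuity of the projection together with (\textit{H\ref{hip:5}}). Your additional bookkeeping paragraph verifying the inequality for both algorithms and your explicit invocation of (\textit{H\ref{hip:5}}) to justify $P_{[\lambda_{\min},\lambda_{\max}]^m}(\bar\lambda)=\bar\lambda$ are welcome clarifications that the paper leaves implicit.
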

\begin{proof}
First, let us see that the sequence $\left\{ \lambda^k \right\}$ is bounded. Suppose, in fact, that is not; then there exists a subsequence $\left\{ \lambda^k \right\}_{k \in \mathcal{K}}$ of $\left\{ \lambda^k \right\}$, where $\mathcal{K} \subset \mathbbm{N}$ is infinite, such that $\lim_{\underset{k \in \mathcal{K}}{k \to \infty}} \left\| \lambda^k \right\| = \infty$. Furthermore, in both Algorithm \ref{SL Inex con Punto Interior} and Algorithm \ref{SL Inex dos variables}, from Step 2 and the definition of $\lambda^{k + 1}$ it follows that, for each $k \in \mathbbm{N}_0$,
\[
\left\| \nabla f \left( x^{k + 1} \right) + \nabla h \left( x^{k + 1} \right) \lambda^{k + 1} \right\| \leq \varepsilon_k .
\]
Or, equivalently,
\begin{equation} \label{gradL en (xnuevo, lambdanuevo) menor a epsilon}
\left\| \nabla f \left( x^k \right) + \nabla h \left( x^k \right) \lambda^k \right\| \leq \varepsilon_{k - 1} , \quad \text{for all } k \in \mathbbm{N}.
\end{equation}
Consequently, for each $k\in\mathcal{K}$, it follows that
\[
\left\| \frac{\nabla f \left( x^k \right)}{\left\| \lambda^k \right\|} + \nabla h \left( x^k \right) \frac{\lambda^k}{\left\| \lambda^k \right\|} \right\| \leq \frac{\varepsilon_{k - 1}}{\left\| \lambda^k \right\|} .
\]
Thus, by taking the limit over $k$ in $\mathcal{K}$ and using Hypothesis (\textit{H\ref{hip:1}}), we obtain
\[
\nabla h \left( \bar{x} \right) \tilde{\lambda} = 0 ,
\]
where $\tilde{\lambda}$ is a limit point of $\left\{ \frac{\lambda^k}{\left\| \lambda^k \right\|} \right\}_{k \in \mathcal{K}}$. Since $\left\| \tilde{\lambda} \right\| = 1$, this contradicts Hypothesis (\textit{H\ref{hip:3}}). Therefore, $\left\{ \lambda^k \right\}$ must be bounded.

Since $\left\{ \lambda^k \right\}$ is a bounded sequence, to show that $\lim_{k \to \infty} \lambda^k = \bar{\lambda}$, it is sufficient to prove that $\bar{\lambda}$ is its unique limit point. To this end, let $\hat{\lambda}$ be a limit point of $\left\{ \lambda^k \right\}$. We will demonstrate that $\hat{\lambda} = \bar{\lambda}$. Since $\hat{\lambda}$ is a limit point of $\left\{ \lambda^k \right\}$, there exists an infinite set $\mathcal{K} \subset \mathbbm{N}$, $\mathcal{K}$ such that $\lim_{\underset{k \in \mathcal{K}}{k \to \infty}} \lambda^k = \hat{\lambda}$. Thus, by taking the limit of \eqref{gradL en (xnuevo, lambdanuevo) menor a epsilon} over $k$ in $\mathcal{K}$, we obtain that
\[
\nabla f \left( \bar{x} \right) + \nabla h \left( \bar{x} \right) \hat{\lambda} = 0 .
\]
Therefore, by Remark \ref{El multiplicador es único}, we have $\hat{\lambda} = \bar{\lambda}$. Thus, $\bar{\lambda}$ is the unique limit point of the sequence $\left\{ \lambda^k \right\}$.

Finally, by the continuity of the projection, it follows that $\lim_{k \to \infty} \bar{\lambda}^k = \bar{\lambda}$.

\qed
\end{proof}

\begin{theorem}
Let us consider either Algorithm \ref{SL Inex con Punto Interior} or Algorithm \ref{SL Inex dos variables}, and assume that Hypotheses (\textit{H\ref{hip:1}}) through (H\textit{\ref{hip:5}}) are satisfied. Additionally, suppose that $s_k = O \left( \left\| h \left( x^k \right) \right\| \right)$ and $\varepsilon_k = o \left( \left\| h \left( x^k \right) \right\| \right)$. Then, the sequence of penalty parameters $\left\{ r_k \right\}$ is bounded.
\end{theorem}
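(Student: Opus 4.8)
The plan is to argue by contradiction: assume $\{r_k\}$ is unbounded, so by Step 4 the penalty parameter is increased infinitely often, which means the strict decrease test $\|h(x^{k+1})\| \leq \tau \|h(x^k)\|$ fails for infinitely many $k$. I will combine this with the stationarity estimate from Step 2 and the convergence $\bar\lambda^k \to \bar\lambda$ (the previous proposition) to derive a contradiction with the second-order sufficient condition (H\ref{hip:4}) via a standard augmented Lagrangian rate argument.

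First I would set up the local error bound. By (H\ref{hip:3}) (LICQ) and (H\ref{hip:4}) (SOSC) at $\bar x$ with the unique multiplier $\bar\lambda$, there is a neighborhood of $(\bar x,\bar\lambda)$ and a constant $C>0$ such that for any $(x,\mu)$ near $(\bar x,\bar\lambda)$,
\[
\|x - \bar x\| \leq C\bigl( \|\nabla f(x) + \nabla h(x)\mu\| + \|h(x)\| \bigr),
\]
and moreover $\|\mu - \bar\lambda\| \leq C\bigl(\|\nabla_x L(x,\mu)\| + \|h(x)\|\bigr)$; these are the classical consequences of LICQ+SOSC (e.g. via the nonsingularity of the KKT Jacobian). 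Since $x^k \to \bar x$ and $\bar\lambda^k \to \bar\lambda$, for $k$ large the iterates lie in this neighborhood. Now apply the error bound at $(x^{k+1}, \lambda^{k+1})$: using \eqref{gradL en (xnuevo, lambdanuevo) menor a epsilon}, $\|\nabla f(x^{k+1}) + \nabla h(x^{k+1})\lambda^{k+1}\| \leq \varepsilon_k$, so
\[
\|x^{k+1} - \bar x\| \leq C\bigl(\varepsilon_k + \|h(x^{k+1})\|\bigr).
\]
The key quantitative step is to relate $\|h(x^{k+1})\|$ to $\|h(x^k)\|$. From the multiplier update $\lambda^{k+1} = \bar\lambda^k + r_k h(x^{k+1})/t_{k+1}$ and the stationarity of $x^{k+1}$, one gets that $x^{k+1}$ is an approximate stationary point of $\bar L_1(\cdot;\bar\lambda^k, r_k)$ up to $\varepsilon_k$ and up to the $s_k$-barrier perturbation; since $t_{k+1} \approx \sqrt{\|h(x^{k+1})\|^2 + s_k^2}$ (exactly in Algorithm \ref{SL Inex con Punto Interior}, approximately via Remark \ref{h_k/t_k y t_k acotadas} in Algorithm \ref{SL Inex dos variables}) and $s_k = O(\|h(x^k)\|)$, the coefficient $r_k/t_{k+1}$ is bounded by $r_k / (c\|h(x^k)\|)$ when $h(x^k)\neq 0$. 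Feeding the error bound into the Lipschitz estimate $\|h(x^{k+1})\| = \|h(x^{k+1}) - h(\bar x)\| \leq L_h \|x^{k+1}-\bar x\|$ yields
\[
\|h(x^{k+1})\| \leq C L_h \bigl(\varepsilon_k + \|h(x^{k+1})\|\bigr),
\]
which is not yet enough; I need to instead bound $\|x^{k+1}-\bar x\|$ in terms of $\|h(x^k)\|$ by exploiting that the approximate stationarity of the augmented Lagrangian with parameter $r_k/t_{k+1}$ large forces $x^{k+1}$ to be much closer to $\bar x$ than $x^k$ is. Concretely, writing the approximate KKT system and using SOSC, one derives $\|x^{k+1} - \bar x\| \leq C\bigl(\varepsilon_k/ (r_k/t_{k+1}) + \text{(multiplier gap)}/(r_k/t_{k+1})\cdot\|h(x^k)\|\bigr)$ type estimates; combined with $\bar\lambda^k \to \bar\lambda$ and $t_{k+1}\le \sqrt{\|h(x^k)\|^2+s_k^2} = O(\|h(x^k)\|)$ this gives, along the subsequence where $r_k\to\infty$,
\[
\frac{\|h(x^{k+1})\|}{\|h(x^k)\|} \longrightarrow 0,
\]
using $\varepsilon_k = o(\|h(x^k)\|)$ to kill the $\varepsilon_k$ term. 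But then for $k$ large the test $\|h(x^{k+1})\| \leq \tau\|h(x^k)\|$ in Step 4 holds, so $r_{k+1} = r_k$, contradicting the assumed unboundedness of $\{r_k\}$.

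The main obstacle is the third step: making precise the claim that approximate stationarity of the $s_k$-smoothed, $r_k$-penalized augmented Lagrangian forces the feasibility ratio $\|h(x^{k+1})\|/\|h(x^k)\|$ to vanish as $r_k\to\infty$. This requires a careful perturbation analysis around $(\bar x,\bar\lambda)$ — essentially showing the augmented Lagrangian subproblem map is a contraction in the feasibility residual with contraction factor $O(1/r_k)$ plus lower-order terms — and one must handle the barrier term $s_k^2/(2t)$ carefully, checking that with $s_k = O(\|h(x^k)\|)$ it only perturbs the effective penalty parameter $r_k/t_{k+1}$ by a bounded multiplicative factor rather than destroying the contraction; the hypothesis $\varepsilon_k = o(\|h(x^k)\|)$ is exactly what is needed so the inexactness term does not dominate. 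The two algorithms are handled uniformly because in both cases Remark \ref{h_k/t_k y t_k acotadas} (or the explicit formula in Algorithm \ref{SL Inex con Punto Interior}) controls $t_{k+1}$ in terms of $\|h(x^k)\|$ and $s_k$.
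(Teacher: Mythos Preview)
Your overall strategy matches the paper's: show that for large $k$ the feasibility decrease test $\|h(x^{k+1})\|\le\tau\|h(x^k)\|$ is satisfied, by proving $\|h(x^{k+1})\|=o(\|h(x^k)\|)$. However, the crucial quantitative step is not correctly identified in your sketch, and the form you write for the key estimate is wrong.

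Your first attempt, the KKT error bound $\|x^{k+1}-\bar x\|\le C(\varepsilon_k+\|h(x^{k+1})\|)$, is correct but circular, as you note. Your second attempt,
\[
\|x^{k+1}-\bar x\|\le C\Bigl(\tfrac{\varepsilon_k}{r_k/t_{k+1}}+\tfrac{\|\bar\lambda^k-\bar\lambda\|}{r_k/t_{k+1}}\,\|h(x^k)\|\Bigr),
\]
is \emph{too strong} and is not what actually holds: if it did, one would not need the hypothesis $\varepsilon_k=o(\|h(x^k)\|)$ at all, since the term $\varepsilon_k t_{k+1}/r_k$ would already be $o(\|h(x^k)\|)$ from $t_{k+1}/r_k\to 0$ alone. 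The paper obtains the correct estimate by a clean observation you are missing: since $\tilde L_{\lambda,r}(x,t)=\bar L_2(x;\lambda,r/t)+\tfrac{r}{2}t$ for $t>0$, the $x$-part of the subproblem is exactly a PHR augmented Lagrangian subproblem with penalty parameter $r_k/t_{k+1}$. Because $t_{k+1}\to 0$ (from $h(x^k)\to 0$ and $s_k=O(\|h(x^k)\|)$) and $r_k\ge r_0>0$, one has $r_k/t_{k+1}\to\infty$, and the known local rate result for PHR methods (\cite[Prop.~4.2]{fernandez2012local}, which uses (H3)--(H4)) gives directly
\[
\|x^{k+1}-\bar x\|+\|\lambda^{k+1}-\bar\lambda\|\le M\Bigl(\varepsilon_k+\tfrac{t_{k+1}}{r_k}\,\|\bar\lambda^k-\bar\lambda\|\Bigr).
\]
Note that $\varepsilon_k$ is \emph{not} divided by the penalty parameter; this is precisely why the hypothesis $\varepsilon_k=o(\|h(x^k)\|)$ is needed. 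Combined with $\|h(x^{k+1})\|\le L\|x^{k+1}-\bar x\|$, $\bar\lambda^k\to\bar\lambda$, and $t_{k+1}=O(\|h(x^k)\|)$, this yields $\|h(x^{k+1})\|=o(\|h(x^k)\|)$.

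Two smaller points. First, the contradiction set-up is unnecessary: the paper argues directly, since $r_k/t_{k+1}\to\infty$ holds whether or not $\{r_k\}$ is bounded. Second, your description of $t_{k+1}$ is garbled: in Algorithm~\ref{SL Inex con Punto Interior} one has $t_{k+1}=\sqrt{\|h(x^k)\|^2+s_k^2}$ (with $x^k$, not $x^{k+1}$), giving $t_{k+1}=O(\|h(x^k)\|)$ immediately; in Algorithm~\ref{SL Inex dos variables}, Remark~\ref{h_k/t_k y t_k acotadas} gives $t_{k+1}=O(\|h(x^{k+1})\|)+O(\|h(x^k)\|)$, so the resulting bound is $\|h(x^{k+1})\|=o(\|h(x^{k+1})\|)+o(\|h(x^k)\|)$ and an absorption step is needed. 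The two cases are therefore not quite uniform.
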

\begin{proof}
First, note that $\frac{r_k}{t_{k + 1}} \to \infty$. Indeed, from Hipotheses (\textit{H\ref{hip:1}}) and (\textit{H\ref{hip:2}}), and the continuity of $h$, we have $\lim_{k \to \infty} h \left( x^k \right) = 0$. Since $s_k = O \left( \left\| h \left( x^k \right) \right\| \right)$, it follows that $t_{k + 1} \to 0$. Given that $\left\{ r_k \right\}$ is non--decreasing, the result follows.

On the other hand, recall that $\tilde{L}_{\lambda , r} \left( x , t \right) = \bar{L} \left( x ; \lambda , \frac{r}{t} \right) + \frac{r}{2} t$ when $t > 0$. Considering this, along with the previous paragraph and using Hypotheses (\textit{H\ref{hip:1}}), (\textit{H\ref{hip:3}}) and (\textit{H\ref{hip:4}}), we obtain, according to \cite[Prop. 4.2]{fernandez2012local}, that there exists $M > 0$ such that, for sufficiently large $k$,
\begin{equation} \label{acotación del paper de Damián y Solodov}
\left\| x^{k + 1} - \bar{x} \right\| + \left\| \lambda^{k + 1} - \bar{\lambda} \right\| \leq M \left( \varepsilon_k + \frac{t_{k + 1}}{r_k} \left\| \bar{\lambda}^k - \bar{\lambda} \right\| \right) .
\end{equation}

Furthermore, since $h \left( \bar{x} \right) = 0$, and due to the continuity of the first derivatives of $h$, there exists $L > 0$ such that, for all $k \in \mathbbm{N}_0$,
\[
\left\| h \left( x^{k + 1} \right) \right\| \leq L \left\| x^{k + 1} - \bar{x} \right\| .
\]
Therefore, by \eqref{acotación del paper de Damián y Solodov}, it follows that for sufficiently large $k$,
\begin{equation} \label{norm de h(x(k+1)) debe ser o pequeño de h(xk)}
\left\| h \left( x^{k + 1} \right) \right\| \leq LM \left( \varepsilon_k + \frac{t_{k + 1}}{r_k} \left\| \bar{\lambda}^k - \bar{\lambda} \right\| \right) .  
\end{equation}

Now, it is necessary to distinguish between two cases, depending on the algorithm used to obtain $t_{k + 1}$: 
\begin{itemize}
\item In the case where $t_{k + 1}$ is obtained from the application of Algorithm \ref{SL Inex con Punto Interior}, according to Step 2, we have $t_{k + 1} = \sqrt{\left\| h \left( x^k \right) \right\|^2 + s_k^2}$. Since $s_k = O \left( \left\| h \left( x^k \right) \right\| \right)$, it follows that $t_{k + 1} = O \left( \left\| h \left( x^k \right) \right\| \right)$. Consequently, given that $\lim_{k \to \infty} \bar{\lambda}^k = \bar{\lambda}$, we obtain that $\frac{t_{k + 1}}{r_k} \left\| \bar{\lambda}^k - \bar{\lambda} \right\| = o \left( \left\| h \left( x^k \right) \right\| \right)$. Therefore, since $\varepsilon_k = o \left( \left\| h \left( x^k \right) \right\| \right)$, from \eqref{norm de h(x(k+1)) debe ser o pequeño de h(xk)}, it follows that $\left\| h \left( x^{k + 1} \right) \right\| = o \left( \left\| h \left( x^k \right) \right\| \right)$.
\item In the case where $t_{k + 1}$ arises from the application of Algorithm \ref{SL Inex dos variables}, by Remark \ref{h_k/t_k y t_k acotadas}, we have that
\[
t_{k + 1} \leq \sqrt{\frac{\left\| h \left( x^{k + 1} \right) \right\|^2 + s_k^2}{1 - \frac{2 \varepsilon_k}{r_k}}} .
\]

Then, since $1 - \frac{2 \varepsilon_k}{r_k} \to 1$ and $s_k = O \left( \left\| h \left( x^k \right) \right\| \right)$, it follows that $t_{k + 1} = O \left( \left\| h \left( x^{k + 1} \right) \right\| \right) + O \left( \left\| h \left( x^k \right) \right\| \right)$. Therefore, applying a similar reasoning to the previous case, we obtain $\left\| h \left( x^{k + 1} \right) \right\| = o \left( \left\| h \left( x^{k + 1} \right) \right\| \right) + o \left( \left\| h \left( x^k \right) \right\| \right)$. This implies, as easily verified, that $\left\| h \left( x^{k + 1} \right) \right\| = o \left( \left\| h \left( x^k \right) \right\| \right)$.
\end{itemize}

In summary, we have shown that, regardless of the algorithm used to obtain $t_{k + 1}$, it holds that $\left\| h \left( x^{k + 1} \right) \right\| = o \left( \left\| h \left( x^k \right) \right\| \right)$. Therefore, $\left\| h \left( x^{k + 1} \right) \right\| \leq \tau \left\| h \left( x^k \right) \right\|$ for sufficiently large $k$. Consequently, the sequence $\left\{ r_k \right\}$ must be bounded.

\qed
\end{proof}

\section{Numerical results} \label{sec:numerics}
In this section, we will analyze a series of problems from the base \cite{hock1980test}, selecting only those problems with equality constraints: problems 6--9, 26--28, 39--40, 42, 47--52, 56, 61, 77--79. Additionally, problems formulated by the authors (501--514) have been included, which feature either one or two constraints, a domain dimension of at most 3, and involve simple functions (typically linear or quadratic). Some of these problems have finite feasible sets, while others have infinite but easily described sets (such as a spheres or a circles). Problem 508 uses the well--known Rosenbrock function as the objective function \cite{smith2020}.

For the practical implementations of ALGENCAN, we used the default parameters. For both Algorithm \ref{SL Inex con Punto Interior} and Algorithm \ref{SL Inex dos variables}, the parameters were set as follows: $\lambda_{\min} = - 10^{20}$, $\lambda_{\max} = 10^{20}$, $\tau = 0.9$, $\gamma = 10$, $tol = 10^{- 8}$, $x_0$ as specified for each problem, $t_0 = 1$, $\bar{\lambda}^0 = 0$, $r_0 = 10$.

All experiments were conducted on a PC running Linux, Core(TM) i7-10510U CPU \@ 1.80GHz with 32 GB of RAM. The algorithms were implemented in GNU Fortran 95, using compiler version 4:11.2.0.

Step 2 of Algorithms \ref{SL Inex con Punto Interior} and \ref{SL Inex dos variables} was solved by GENCAN, which is part of the well--known software ALGENCAN \cite{birgin2014practical,birgin2000nonmonotone,andreani2008augmented} (version 3.1.1). GENCAN \cite{birgin2002large,andreani2010second,andretta2005practical,birgin2001box,birgin2008structured} is an active set method with projected gradients designed for bound--constrained minimization.

The test problems were solved with ALGENCAN, Algorithm \ref{SL Inex con Punto Interior}, and Algorithm \ref{SL Inex dos variables}. The exit flags of ALGENCAN are stored in the variable {\it Inform}, and can take the following values:
\begin{itemize}
\item {\it 0:} Solution was found (according to the stopping criteria of ALGENCAN).
\item {\it 1:} The penalty parameter is too large. The problem may be infeasible or badly scaled. Further analysis is required.
\item {\it 2:} Maximum of iterations reached. Feasibility--complementarity and optimality tolerances could not be achieved. Whether the final iterate is a solution or not requires further analysis.
\item {\it 3:} It seems that a stationary--of--the--infeasibility probably infeasible point was found. Whether the final iterate is a solution or not requires further analysis.
\end{itemize}

The exit flags for the other two algorithms are also stored in the variable {\it Inform}, with the following values:
\begin{itemize}
\item {\it 0:} Solution was found (i.e., Step 1 was successfully completed).
\item {\it 1:} Maximum number of iterations reached.
\end{itemize}

Tables \ref{table:algencan}, \ref{table:alg2}, \ref{table:alg3} show the results using ALGENCAN, Algorithm \ref{SL Inex con Punto Interior}, and Algorithm \ref{SL Inex dos variables}, respectively. Each column of the tables represents, from left to right:
\begin{itemize}
\item Prob.: problem number to be solved.
\item It.: number of external iterations.
\item KKT norm: KKT norm of the last calculated point.
\item Int. It.: the sum of all internal iterations during the algorithm's execution.
\item Inform: exit flag, as explained above.
\item $x$: last point determined by the algorithm.
\item $f \left( x \right)$: value of the objective function at the last approximation.
\item $\lambda$: last Lagrange multiplier obtained.
\item Infeas.: norm of infeasibility at the final point.
\end{itemize}

It is important to highlight that the stopping criteria for ALGENCAN differ from those of Algorithms \ref{SL Inex con Punto Interior} and \ref{SL Inex dos variables}. Specifically, the latter algorithms stop when the KKT norm is less than a predefined tolerance, whereas ALGENCAN employs a different criterion. This discrepancy explains why there are problems where ALGENCAN reports a zero output but does not meet the KKT norm tolerance required by Algorithms \ref{SL Inex con Punto Interior} and \ref{SL Inex dos variables}. 

Considering these stopping criteria, we observe that ALGENCAN was unable to solve problems 56, 79, 511 and 512, whereas Algorithms \ref{SL Inex con Punto Interior} and \ref{SL Inex dos variables} could not solve problems 26, 56, 79 and 511.

Some preliminary conclusions can be drawn from inspecting the Tables:
\begin{itemize}
\item The three algorithms solve all problems except four. ALGENCAN could not solve problems 56, 79, 511, 512, while Algorithms 2 and 3 could not solve problems 26, 56, 79 and 511. This means that ALGENCAN was the winner on problem 26, and the other two algorithms were the winners on 512.
\item Considering the number of internal iterations, Algorithm 2 is better than Algorithm 3.
\item Considering the number of internal iterations, Algorithm 2 is a good competitor to ALGENCAN (in fact, it performs fewer internal iterations in almost all problems).
\item All three algorithms obtain reasonable good primal approximations but fail to solve the Lagrangian multipliers.
\end{itemize}

The numerical experiments shown in the result tables suggest that our algorithms are competitive when compared to the well--known ALGENCAN software. This encourages us to investigate deeper into this novel strategy in order to be able to solve a wider range of test problems.

%TABLA ALGENCAN
{\scriptsize
\begin{landscape} 
 
\begin{longtable}{>
{\centering\arraybackslash}m{1.0cm}|>{\centering\arraybackslash}m{1.0cm}|>{\centering\arraybackslash}m{2.0cm}|>{\centering\arraybackslash}m{1.0cm}|>{\centering\arraybackslash}m{1.2cm}|>{\centering\arraybackslash}m{2.0cm}|>{\centering\arraybackslash}m{2.0cm}|>{\centering\arraybackslash}m{2.0cm}|>{\centering\arraybackslash}m{2.0cm}}
Prob. & It. & KKT norm & Int. it. & Inform & $x$ & $f \left( x \right)$ & $\lambda$ & Infeas. \\[2mm] \hline
\endfirsthead
Prob. & It. & KKT norm & Int. it. & Inform & $x$ & $f \left( x \right)$ & $\lambda$ & Infeas. \\[2mm] \hline
\endhead
\hline
\multicolumn{9}{r}{\textit{Continued on the next page.}}
\endfoot
\endlastfoot
 
6  &   6 & 3.19E-10 &  23 & 0 &  1.0000E+00 &  7.6980E-20 &  3.1164E-11 &  5.2225E-12 \\ 
   &     &          &     &   &  1.0000E+00 &             &             &             \\[2mm] \hline 
7  &  15 & 4.00E-10 &  49 & 0 & -1.8908E-13 & -1.7321E+00 &  2.8868E-01 &  4.0008E-10 \\ 
   &     &          &     &   &  1.7321E+00 &             &             &             \\[2mm] \hline 
8  &   3 & 7.94E-14 &   9 & 0 &  4.6016E+00 & -1.0000E+00 & -1.6095E-15 &  7.3949E-14 \\ 
   &     &          &     &   &  1.9558E+00 &             & -3.0103E-15 &             \\[2mm] \hline 
9  &   4 & 3.44E-10 &   8 & 0 & -7.5000E+01 & -5.0000E-01 & -3.2725E-02 &  2.6745E-10 \\ 
   &     &          &     &   & -1.0000E+02 &             &             &             \\[2mm] \hline 
26 &   5 & 5.96E-09 &  37 & 0 &  1.0004E+00 &  3.6300E-13 & -2.4609E-10 &  1.4997E-10 \\ 
   &     &          &     &   &  1.0004E+00 &             &             &             \\ 
   &     &          &     &   &  9.9961E-01 &             &             &             \\[2mm] \hline 
27 &   8 & 8.43E-09 &  55 & 0 & -1.0000E+00 &  4.0000E-02 &  4.0000E-02 &  3.8751E-11 \\ 
   &     &          &     &   &  1.0000E+00 &             &             &             \\ 
   &     &          &     &   & -1.1131E-08 &             &             &             \\[2mm] \hline 
28 &   7 & 2.23E-09 &  33 & 0 &  5.0000E-01 &  1.5406E-18 & -1.2872E-09 &  1.9018E-11 \\ 
   &     &          &     &   & -5.0000E-01 &             &             &             \\ 
   &     &          &     &   &  5.0000E-01 &             &             &             \\[2mm] \hline 
39 &  20 & 1.97E-10 & 125 & 0 &  1.0000E+00 & -1.0000E+00 & -1.0000E+00 &  1.7795E-10 \\ 
   &     &          &     &   &  1.0000E+00 &             & -1.0000E+00 &             \\ 
   &     &          &     &   &  1.0262E-14 &             &             &             \\ 
   &     &          &     &   & -1.2755E-14 &             &             &             \\[2mm] \hline 
40 &   8 & 1.33E-09 &  54 & 0 &  7.9370E-01 & -2.5000E-01 &  5.0000E-01 &  5.1885E-10 \\ 
   &     &          &     &   &  7.0711E-01 &             & -4.7194E-01 &             \\ 
   &     &          &     &   &  5.2973E-01 &             &  3.5355E-01 &             \\ 
   &     &          &     &   &  8.4090E-01 &             &             &             \\[2mm] \hline 
42 &   7 & 4.05E-09 &  33 & 0 &  2.0000E+00 &  1.3858E+01 & -2.0000E+00 &  8.3106E-11 \\ 
   &     &          &     &   &  2.0000E+00 &             &  2.5355E+00 &             \\ 
   &     &          &     &   &  8.4853E-01 &             &             &             \\ 
   &     &          &     &   &  1.1314E+00 &             &             &             \\[2mm] \hline 
47 &   7 & 4.34E-09 &  68 & 0 &  1.0000E+00 &  8.7943E-19 & -7.8935E-11 &  9.1590E-11 \\ 
   &     &          &     &   &  1.0000E+00 &             &  1.7514E-09 &             \\ 
   &     &          &     &   &  1.0000E+00 &             & -8.9889E-10 &             \\ 
   &     &          &     &   &  1.0000E+00 &             &             &             \\ 
   &     &          &     &   &  1.0000E+00 &             &             &             \\[2mm] \hline 
48 &   6 & 1.82E-08 &  22 & 0 &  1.0000E+00 &  3.5793E-17 & -2.9494E-09 &  1.2964E-12 \\ 
   &     &          &     &   &  1.0000E+00 &             &  4.5016E-10 &             \\ 
   &     &          &     &   &  1.0000E+00 &             &             &             \\ 
   &     &          &     &   &  1.0000E+00 &             &             &             \\ 
   &     &          &     &   &  1.0000E+00 &             &             &             \\[2mm] \hline 
49 &   7 & 3.02E-07 &  51 & 0 &  1.0096E+00 &  5.4099E-10 &  5.1837E-08 &  3.3812E-11 \\ 
   &     &          &     &   &  1.0096E+00 &             & -2.9788E-08 &             \\ 
   &     &          &     &   &  1.0000E+00 &             &             &             \\ 
   &     &          &     &   &  9.9518E-01 &             &             &             \\ 
   &     &          &     &   &  1.0000E+00 &             &             &             \\[2mm] \hline 
50 &   5 & 2.82E-08 &  20 & 0 &  1.0000E+00 &  6.6509E-18 &  6.2155E-09 &  2.1283E-10 \\ 
   &     &          &     &   &  1.0000E+00 &             & -1.0453E-08 &             \\ 
   &     &          &     &   &  1.0000E+00 &             &  5.1301E-09 &             \\ 
   &     &          &     &   &  1.0000E+00 &             &             &             \\ 
   &     &          &     &   &  1.0000E+00 &             &             &             \\[2mm] \hline 
51 &   6 & 1.74E-09 &  19 & 0 &  1.0000E+00 &  2.8382E-20 &  6.3542E-10 &  7.2982E-11 \\ 
   &     &          &     &   &  1.0000E+00 &             &  9.5253E-11 &             \\ 
   &     &          &     &   &  1.0000E+00 &             & -1.4789E-09 &             \\ 
   &     &          &     &   &  1.0000E+00 &             &             &             \\ 
   &     &          &     &   &  1.0000E+00 &             &             &             \\[2mm] \hline 
52 &   8 & 2.46E-08 &  42 & 0 & -9.4556E-02 &  5.3266E+00 &  3.2779E+00 &  1.6924E-11 \\ 
   &     &          &     &   &  3.1519E-02 &             &  2.9054E+00 &             \\ 
   &     &          &     &   &  5.1576E-01 &             & -7.7479E+00 &             \\ 
   &     &          &     &   & -4.5272E-01 &             &             &             \\ 
   &     &          &     &   &  3.1519E-02 &             &             &             \\[2mm] \hline 
56 &  15 & 5.55E+37 &  34 & 1 & -3.7198E+16 &  1.3968E+47 & -9.5844E+36 &  6.0282E+16 \\ 
   &     &          &     &   &  3.4676E+16 &             &  8.9346E+36 &             \\ 
   &     &          &     &   &  1.0829E+14 &             &  2.7902E+34 &             \\ 
   &     &          &     &   & -1.9582E+18 &             &  9.2673E+35 &             \\ 
   &     &          &     &   & -1.9282E+18 &             &             &             \\ 
   &     &          &     &   & -1.9291E+18 &             &             &             \\ 
   &     &          &     &   &  2.5304E+18 &             &             &             \\[2mm] \hline 
61 &   6 & 2.79E-08 &  18 & 0 &  5.3268E+00 & -1.4365E+02 & -8.8768E-01 &  3.3420E-11 \\ 
   &     &          &     &   & -2.1190E+00 &             & -1.7378E+00 &             \\ 
   &     &          &     &   &  3.2105E+00 &             &             &             \\[2mm] \hline 
77 &  10 & 5.53E-09 &  54 & 0 &  1.1662E+00 &  2.4151E-01 & -8.5540E-02 &  4.5998E-11 \\ 
   &     &          &     &   &  1.1821E+00 &             & -3.1878E-02 &             \\ 
   &     &          &     &   &  1.3803E+00 &             &             &             \\ 
   &     &          &     &   &  1.5060E+00 &             &             &             \\ 
   &     &          &     &   &  6.1092E-01 &             &             &             \\[2mm] \hline 
78 &   8 & 8.65E-09 &  38 & 0 & -1.7171E+00 & -2.9197E+00 &  7.4445E-01 &  6.7312E-10 \\ 
   &     &          &     &   &  1.5957E+00 &             & -7.0358E-01 &             \\ 
   &     &          &     &   &  1.8272E+00 &             &  9.6806E-02 &             \\ 
   &     &          &     &   & -7.6364E-01 &             &             &             \\ 
   &     &          &     &   & -7.6364E-01 &             &             &             \\[2mm] \hline 
79 &  50 & 2.11E+13 & 396 & 2 &  3.0206E+04 &  2.4224E+14 & -1.3817E+09 &  5.1635E-05 \\ 
   &     &          &     &   & -3.5822E+02 &             & -2.9089E+11 &             \\ 
   &     &          &     &   & -5.4120E+01 &             &  2.9505E+08 &             \\ 
   &     &          &     &   &  3.2900E+03 &             &             &             \\ 
   &     &          &     &   &  6.6213E-05 &             &             &             \\[2mm] \hline 
501 &  26 & 6.28E-10 &  47 & 0 &  1.0000E+00 & -1.5000E+00 &  5.5556E-01 &  6.2821E-10 \\[2mm] \hline 
502 &   3 & 7.20E-11 &   3 & 0 &  7.1957E-11 &  2.5889E-21 & -7.1956E-11 &  7.1957E-11 \\[2mm] \hline 
503 &   3 & 4.73E-11 &   3 & 0 &  2.3672E-11 &  1.1207E-21 & -4.7344E-11 &  4.7344E-11 \\ 
    &     &          &     &   &  2.3672E-11 &             &             &             \\[2mm] \hline 
504 &  13 & 3.93E-10 &  10 & 0 & -1.0000E+00 &  0.0000E+00 & -8.7294E-11 &  3.9268E-10 \\[2mm] \hline 
505 &   8 & 1.78E-10 &  65 & 0 & -1.8724E-19 & -1.0000E+00 &  1.5000E+00 &  1.9016E-11 \\ 
    &     &          &     &   & -1.0000E+00 &             &             &             \\ 
    &     &          &     &   &  1.4775E-20 &             &             &             \\[2mm] \hline 
506 &  16 & 7.43E-10 &  41 & 0 & -7.0711E-01 & -1.4142E+00 &  7.0711E-01 &  7.4303E-10 \\ 
    &     &          &     &   & -7.0711E-01 &             &             &             \\[2mm] \hline 
507 &  14 & 2.92E-10 &  30 & 0 & -1.0000E+00 & -1.0000E+00 & -5.0000E-01 &  2.9178E-10 \\[2mm] \hline 
508 &   7 & 5.34E-03 &  21 & 0 &  1.0000E+00 &  8.7977E-08 & -6.5693E-03 &  3.6369E-12 \\ 
    &     &          &     &   &  1.0000E+00 &             &             &             \\[2mm] \hline 
509 &   5 & 1.61E-08 &  24 & 0 &  6.0000E+00 & -1.0800E+02 &  1.5000E+00 &  1.5200E-10 \\ 
    &     &          &     &   &  3.0000E+00 &             &             &             \\[2mm] \hline 
510 &   7 & 4.08E-09 &  35 & 0 & -5.3452E-01 & -3.7417E+00 &  1.8708E+00 &  2.3976E-11 \\ 
    &     &          &     &   & -8.0178E-01 &             &             &             \\ 
    &     &          &     &   & -2.6726E-01 &             &             &             \\[2mm] \hline 
511 &  50 & 1.61E-06 & 377 & 2 &  1.7203E-10 & -2.7453E-05 &  3.6425E+04 &  4.1482E-10 \\ 
    &     &          &     &   & -2.7453E-05 &             & -1.8212E+04 &             \\[2mm] \hline 
512 &   0 & 1.73E+00 &   0 & 3 &  0.0000E+00 &  0.0000E+00 &  0.0000E+00 &  1.0000E+00 \\ 
    &     &          &     &   &  0.0000E+00 &             &             &             \\[2mm] \hline 
513 &   2 & 1.66E-10 &   5 & 0 &  1.6601E-10 & -7.5943E-40 & -1.3280E-19 &  1.6601E-10 \\[2mm] \hline 
514 &   6 & 1.91E-10 &   6 & 0 &  1.0000E+00 &  5.0000E-01 & -1.0000E+00 &  1.9056E-10 \\     &     &          &     &   & -4.8412E-50 &             &             &             \\[2mm] \hline 
 
\caption{Problems solved by ALGENCAN.}
\label{table:algencan} \\ 
\end{longtable}
\end{landscape}
}

%TABLA ALGORITMO 2
{\scriptsize
\begin{landscape} 
 
\begin{longtable}{>
{\centering\arraybackslash}m{1.0cm}|>{\centering\arraybackslash}m{1.0cm}|>{\centering\arraybackslash}m{2.0cm}|>{\centering\arraybackslash}m{1.0cm}|>{\centering\arraybackslash}m{1.2cm}|>{\centering\arraybackslash}m{2.0cm}|>{\centering\arraybackslash}m{2.0cm}|>{\centering\arraybackslash}m{2.0cm}|>{\centering\arraybackslash}m{2.0cm}}
Prob. & It. & KKT norm & Int. it. & Inform & $x$ & $f \left( x \right)$ & $\lambda$ & Infeas. \\[2mm] \hline
\endfirsthead
Prob. & It. & KKT norm & Int. it. & Inform & $x$ & $f \left( x \right)$ & $\lambda$ & Infeas. \\[2mm] \hline
\endhead
\hline
\multicolumn{9}{r}{\textit{Continued on the next page.}}
\endfoot
\endlastfoot

6  &   6 & 6.59E-09 &  48 & 0 &  1.0000E+00 &  1.7749E-30 & -6.2058E-15 &  1.5465E-12 \\ 
   &     &          &     &   &  1.0000E+00 &             &             &             \\[2mm] \hline 
7  &   6 & 1.34E-09 &  18 & 0 & -2.8635E-11 & -1.7321E+00 &  2.8868E-01 &  2.0242E-12 \\ 
   &     &          &     &   &  1.7321E+00 &             &             &             \\[2mm] \hline 
8  &   4 & 2.95E-12 &   9 & 0 &  4.6016E+00 & -1.0000E+00 & -2.8633E-13 &  0.0000E+00 \\ 
   &     &          &     &   &  1.9558E+00 &             & -2.4159E-14 &             \\[2mm] \hline 
9  &   3 & 5.51E-10 &   4 & 0 & -1.8750E+03 & -5.0000E-01 & -3.2725E-02 &  0.0000E+00 \\ 
   &     &          &     &   & -2.5000E+03 &             &             &             \\[2mm] \hline 
26 & 100 & 9.65E-02 & 337 & 1 &  9.9490E-01 &  1.0590E-08 & -1.9556E-02 &  0.0000E+00 \\ 
   &     &          &     &   &  9.9490E-01 &             &             &             \\ 
   &     &          &     &   &  1.0050E+00 &             &             &             \\[2mm] \hline 
27 &   7 & 1.65E-11 &  27 & 0 & -1.0000E+00 &  4.0000E-02 &  4.0000E-02 &  7.9936E-15 \\ 
   &     &          &     &   &  1.0000E+00 &             &             &             \\ 
   &     &          &     &   & -1.2091E-15 &             &             &             \\[2mm] \hline 
28 &   5 & 4.44E-12 &   6 & 0 &  5.0000E-01 &  2.3179E-24 &  1.1578E-12 &  6.6613E-16 \\ 
   &     &          &     &   & -5.0000E-01 &             &             &             \\ 
   &     &          &     &   &  5.0000E-01 &             &             &             \\[2mm] \hline 
39 &   9 & 2.03E-09 &  22 & 0 &  1.0000E+00 & -1.0000E+00 & -1.0000E+00 &  1.2511E-11 \\ 
   &     &          &     &   &  1.0000E+00 &             & -1.0000E+00 &             \\ 
   &     &          &     &   &  1.1100E-10 &             &             &             \\ 
   &     &          &     &   &  1.6069E-15 &             &             &             \\[2mm] \hline 
40 &   6 & 6.18E-09 &  11 & 0 &  7.9370E-01 & -2.5000E-01 &  5.0000E-01 &  2.2698E-11 \\ 
   &     &          &     &   &  7.0711E-01 &             & -4.7194E-01 &             \\ 
   &     &          &     &   &  5.2973E-01 &             &  3.5355E-01 &             \\ 
   &     &          &     &   &  8.4090E-01 &             &             &             \\[2mm] \hline 
42 &   7 & 1.23E-09 &  14 & 0 &  2.0000E+00 &  1.3858E+01 & -2.0000E+00 &  6.0051E-12 \\ 
   &     &          &     &   &  2.0000E+00 &             &  2.5355E+00 &             \\ 
   &     &          &     &   &  8.4853E-01 &             &             &             \\ 
   &     &          &     &   &  1.1314E+00 &             &             &             \\[2mm] \hline 
47 &   6 & 1.61E-09 &  25 & 0 &  1.0000E+00 &  2.2413E-20 & -1.7803E-11 &  4.3042E-13 \\ 
   &     &          &     &   &  1.0000E+00 &             & -4.7435E-11 &             \\ 
   &     &          &     &   &  1.0000E+00 &             &  1.1626E-10 &             \\ 
   &     &          &     &   &  1.0000E+00 &             &             &             \\ 
   &     &          &     &   &  1.0000E+00 &             &             &             \\[2mm] \hline 
48 &   4 & 2.91E-09 &   6 & 0 &  1.0000E+00 &  2.9251E-21 & -9.0754E-12 &  9.4296E-13 \\ 
   &     &          &     &   &  1.0000E+00 &             & -1.3989E-11 &             \\ 
   &     &          &     &   &  1.0000E+00 &             &             &             \\ 
   &     &          &     &   &  1.0000E+00 &             &             &             \\ 
   &     &          &     &   &  1.0000E+00 &             &             &             \\[2mm] \hline 
49 &  16 & 7.48E-09 &  58 & 0 &  1.0018E+00 &  6.2683E-13 &  5.7117E-10 &  6.2172E-15 \\ 
   &     &          &     &   &  1.0018E+00 &             &  8.2825E-11 &             \\ 
   &     &          &     &   &  1.0000E+00 &             &             &             \\ 
   &     &          &     &   &  9.9911E-01 &             &             &             \\ 
   &     &          &     &   &  1.0000E+00 &             &             &             \\[2mm] \hline 
50 &   4 & 1.08E-09 &  11 & 0 &  1.0000E+00 &  8.0396E-21 & -2.1761E-11 &  3.3669E-13 \\ 
   &     &          &     &   &  1.0000E+00 &             &  1.0586E-11 &             \\ 
   &     &          &     &   &  1.0000E+00 &             & -1.2536E-11 &             \\ 
   &     &          &     &   &  1.0000E+00 &             &             &             \\ 
   &     &          &     &   &  1.0000E+00 &             &             &             \\[2mm] \hline 
51 &   6 & 2.24E-09 &   8 & 0 &  1.0000E+00 &  5.6697E-24 & -2.2413E-10 &  1.3496E-12 \\ 
   &     &          &     &   &  1.0000E+00 &             & -1.9609E-10 &             \\ 
   &     &          &     &   &  1.0000E+00 &             &  6.8389E-10 &             \\ 
   &     &          &     &   &  1.0000E+00 &             &             &             \\ 
   &     &          &     &   &  1.0000E+00 &             &             &             \\[2mm] \hline 
52 &  10 & 6.35E-10 &  22 & 0 & -9.4556E-02 &  5.3266E+00 &  3.2779E+00 &  2.1715E-12 \\ 
   &     &          &     &   &  3.1519E-02 &             &  2.9054E+00 &             \\ 
   &     &          &     &   &  5.1576E-01 &             & -7.7479E+00 &             \\ 
   &     &          &     &   & -4.5272E-01 &             &             &             \\ 
   &     &          &     &   &  3.1519E-02 &             &             &             \\[2mm] \hline 
56 & 100 & 1.40E+91 & 298 & 1 &  1.5152E+00 &  1.0502E+00 & -5.0000E+03 &  6.3584E+00 \\ 
   &     &          &     &   &  9.6954E-01 &             & -5.0000E+03 &             \\ 
   &     &          &     &   & -7.1490E-01 &             & -5.0000E+03 &             \\ 
   &     &          &     &   &  2.9998E+19 &             &  5.0000E+03 &             \\ 
   &     &          &     &   &  7.1635E+18 &             &             &             \\ 
   &     &          &     &   & -2.2724E+19 &             &             &             \\ 
   &     &          &     &   & -1.9009E+19 &             &             &             \\[2mm] \hline 
61 &   6 & 1.49E-09 &  16 & 0 &  5.3268E+00 & -1.4365E+02 & -8.8768E-01 &  1.0668E-12 \\ 
   &     &          &     &   & -2.1190E+00 &             & -1.7378E+00 &             \\ 
   &     &          &     &   &  3.2105E+00 &             &             &             \\[2mm] \hline 
77 &   7 & 5.21E-10 &  21 & 0 &  1.1662E+00 &  2.4151E-01 & -8.5540E-02 &  4.8630E-14 \\ 
   &     &          &     &   &  1.1821E+00 &             & -3.1878E-02 &             \\ 
   &     &          &     &   &  1.3803E+00 &             &             &             \\ 
   &     &          &     &   &  1.5060E+00 &             &             &             \\ 
   &     &          &     &   &  6.1092E-01 &             &             &             \\[2mm] \hline 
78 &   6 & 1.13E-09 &  14 & 0 & -1.7171E+00 & -2.9197E+00 &  7.4445E-01 &  2.5413E-13 \\ 
   &     &          &     &   &  1.5957E+00 &             & -7.0358E-01 &             \\ 
   &     &          &     &   &  1.8272E+00 &             &  9.6806E-02 &             \\ 
   &     &          &     &   & -7.6364E-01 &             &             &             \\ 
   &     &          &     &   & -7.6364E-01 &             &             &             \\[2mm] \hline 
79 & 100 & 2.47E+06 & 375 & 1 &  5.7299E-01 &  5.9982E+01 &  5.0000E+03 &  2.3441E-12 \\ 
   &     &          &     &   &  1.1160E+00 &             & -5.0000E+03 &             \\ 
   &     &          &     &   &  1.6417E+00 &             &  5.0000E+03 &             \\ 
   &     &          &     &   &  4.4075E+00 &             &             &             \\ 
   &     &          &     &   &  3.4904E+00 &             &             &             \\[2mm] \hline 
501 &   7 & 1.90E-10 &  16 & 0 &  1.0000E+00 & -1.5000E+00 &  5.5556E-01 &  5.1514E-13 \\[2mm] \hline 
502 &   4 & 9.17E-09 &   3 & 0 &  5.9086E-11 &  1.7456E-21 & -5.9073E-11 &  5.9086E-11 \\[2mm] \hline 
503 &   4 & 9.46E-09 &   3 & 0 & -2.1551E-11 &  9.2889E-22 &  4.3099E-11 &  4.3102E-11 \\ 
    &     &          &     &   & -2.1551E-11 &             &             &             \\[2mm] \hline 
504 &   5 & 5.47E-11 &  20 & 0 &  1.0000E+00 &  0.0000E+00 & -6.4788E-14 &  5.2625E-14 \\[2mm] \hline 
505 &   9 & 1.51E-09 &  51 & 0 &  1.1604E-11 & -1.0000E+00 &  1.5000E+00 &  3.3107E-13 \\ 
    &     &          &     &   & -1.0000E+00 &             &             &             \\ 
    &     &          &     &   &  2.4703E-14 &             &             &             \\[2mm] \hline 
506 &   6 & 7.71E-09 &  19 & 0 & -7.0711E-01 & -1.4142E+00 &  7.0711E-01 &  2.0228E-11 \\ 
    &     &          &     &   & -7.0711E-01 &             &             &             \\[2mm] \hline 
507 &   6 & 1.73E-09 &  13 & 0 & -1.0000E+00 & -1.0000E+00 & -5.0000E-01 &  4.5453E-12 \\[2mm] \hline 
508 &   6 & 6.77E-10 &  45 & 0 &  1.0000E+00 &  6.2476E-24 & -4.9880E-12 &  2.5120E-12 \\ 
    &     &          &     &   &  1.0000E+00 &             &             &             \\[2mm] \hline 
509 &   5 & 1.65E-09 &  17 & 0 &  6.0000E+00 & -1.0800E+02 &  1.5000E+00 &  2.1316E-13 \\ 
    &     &          &     &   &  3.0000E+00 &             &             &             \\[2mm] \hline 
510 &   7 & 4.37E-10 &  27 & 0 & -5.3452E-01 & -3.7417E+00 &  1.8708E+00 &  9.7899E-13 \\ 
    &     &          &     &   & -8.0178E-01 &             &             &             \\ 
    &     &          &     &   & -2.6726E-01 &             &             &             \\[2mm] \hline 
511 & 100 & 2.07E+15 & 501 & 1 &  5.1787E-11 &  1.5724E-05 &  5.0000E+03 &  1.4917E-10 \\ 
    &     &          &     &   &  1.5724E-05 &             &  5.0000E+03 &             \\[2mm] \hline 
512 &   6 & 6.19E-09 &   7 & 0 & -7.0711E-01 & -9.8777E-01 &  1.1027E-01 &  1.6256E-11 \\ 
    &     &          &     &   & -7.0711E-01 &             &             &             \\[2mm] \hline 
513 &   2 & 6.45E-10 &   3 & 0 & -4.5608E-12 & -4.3266E-46 & -3.2249E-10 &  4.5608E-12 \\[2mm] \hline 
514 &   7 & 3.55E-10 &   6 & 0 &  1.0000E+00 &  5.0000E-01 & -1.0000E+00 &  2.1860E-13 \\ 
    &     &          &     &   &  3.5348E-10 &             &             &             \\[2mm] \hline 
  
\caption{Problems solved by Algorithm 2} 
\label{table:alg2} \\ 
\end{longtable} 
\end{landscape}
}

%TABLA ALGORITMO 3
{\scriptsize
\begin{landscape} 
 
\begin{longtable}{>{\centering\arraybackslash}m{1.0cm}|>{\centering\arraybackslash}m{1.0cm}|>{\centering\arraybackslash}m{2.0cm}|>{\centering\arraybackslash}m{1.0cm}|>{\centering\arraybackslash}m{1.2cm}|>{\centering\arraybackslash}m{2.0cm}|>{\centering\arraybackslash}m{2.0cm}|>{\centering\arraybackslash}m{2.0cm}|>{\centering\arraybackslash}m{2.0cm}}
Prob. & It. & KKT norm & Int. it. & Inform & $x$ & $f \left( x \right)$ & $\lambda$ & Infeas. \\[2mm] \hline
\endfirsthead
Prob. & It. & KKT norm & Int. it. & Inform & $x$ & $f \left( x \right)$ & $\lambda$ & Infeas. \\[2mm] \hline
\endhead
\hline
\multicolumn{9}{r}{\textit{Continued on the next page.}}
\endfoot
\endlastfoot

6  &  13 & 2.57E-09 & 110 & 0 &  1.0000E+00 &  7.3651E-18 &  3.4826E-10 &  6.6613E-14 \\ 
   &     &          &     &   &  1.0000E+00 &             &             &             \\[2mm] \hline 
7  &   8 & 1.90E-09 &  46 & 0 &  5.0272E-10 & -1.7321E+00 &  2.8868E-01 &  1.4895E-12 \\ 
   &     &          &     &   &  1.7321E+00 &             &             &             \\[2mm] \hline 
8  &  13 & 5.35E-09 &  40 & 0 &  4.6016E+00 & -1.0000E+00 &  1.0027E-10 &  2.3202E-13 \\ 
   &     &          &     &   &  1.9558E+00 &             & -3.3556E-11 &             \\[2mm] \hline 
9  &   5 & 2.18E-09 &  14 & 0 & -1.5000E+01 & -5.0000E-01 & -3.2725E-02 &  1.4211E-13 \\ 
   &     &          &     &   & -2.0000E+01 &             &             &             \\[2mm] \hline 
26 & 100 & 4.74E+01 & 660 & 1 &  9.9437E-01 &  1.5625E-08 & -1.1138E-02 &  2.0828E-13 \\ 
   &     &          &     &   &  9.9438E-01 &             &             &             \\ 
   &     &          &     &   &  1.0056E+00 &             &             &             \\[2mm] \hline 
27 &   6 & 5.22E-09 &  47 & 0 & -1.0000E+00 &  4.0000E-02 &  4.0000E-02 &  2.9801E-11 \\ 
   &     &          &     &   &  1.0000E+00 &             &             &             \\ 
   &     &          &     &   & -2.0599E-09 &             &             &             \\[2mm] \hline 
28 &   7 & 2.63E-09 &  22 & 0 &  5.0000E-01 &  2.0302E-18 &  1.1782E-09 &  2.1871E-14 \\ 
   &     &          &     &   & -5.0000E-01 &             &             &             \\ 
   &     &          &     &   &  5.0000E-01 &             &             &             \\[2mm] \hline 
39 &   9 & 6.28E-09 &  36 & 0 &  1.0000E+00 & -1.0000E+00 & -1.0000E+00 &  8.5081E-11 \\ 
   &     &          &     &   &  1.0000E+00 &             & -1.0000E+00 &             \\ 
   &     &          &     &   &  2.7535E-10 &             &             &             \\ 
   &     &          &     &   &  4.2486E-11 &             &             &             \\[2mm] \hline 
40 &   6 & 9.23E-09 &  18 & 0 &  7.9370E-01 & -2.5000E-01 &  5.0000E-01 &  2.0256E-11 \\ 
   &     &          &     &   &  7.0711E-01 &             & -4.7194E-01 &             \\ 
   &     &          &     &   &  5.2973E-01 &             &  3.5355E-01 &             \\ 
   &     &          &     &   &  8.4090E-01 &             &             &             \\[2mm] \hline 
42 &   7 & 4.02E-09 &  22 & 0 &  2.0000E+00 &  1.3858E+01 & -2.0000E+00 &  8.7150E-12 \\ 
   &     &          &     &   &  2.0000E+00 &             &  2.5355E+00 &             \\ 
   &     &          &     &   &  8.4853E-01 &             &             &             \\ 
   &     &          &     &   &  1.1314E+00 &             &             &             \\[2mm] \hline 
47 &  10 & 9.57E-09 &  46 & 0 &  1.0000E+00 &  4.1159E-19 &  4.3795E-10 &  1.6206E-12 \\ 
   &     &          &     &   &  1.0000E+00 &             & -1.0239E-09 &             \\ 
   &     &          &     &   &  1.0000E+00 &             &  3.6878E-09 &             \\ 
   &     &          &     &   &  1.0000E+00 &             &             &             \\ 
   &     &          &     &   &  1.0000E+00 &             &             &             \\[2mm] \hline 
48 &   7 & 5.11E-09 &  22 & 0 &  1.0000E+00 &  6.1576E-18 &  1.2104E-09 &  4.9714E-14 \\ 
   &     &          &     &   &  1.0000E+00 &             &  4.8590E-10 &             \\ 
   &     &          &     &   &  1.0000E+00 &             &             &             \\ 
   &     &          &     &   &  1.0000E+00 &             &             &             \\ 
   &     &          &     &   &  1.0000E+00 &             &             &             \\[2mm] \hline 
49 &  24 & 6.57E-09 & 112 & 0 &  1.0022E+00 &  1.4621E-12 &  1.3366E-09 &  2.8087E-15 \\ 
   &     &          &     &   &  1.0022E+00 &             & -3.6112E-10 &             \\ 
   &     &          &     &   &  1.0000E+00 &             &             &             \\ 
   &     &          &     &   &  9.9890E-01 &             &             &             \\ 
   &     &          &     &   &  1.0000E+00 &             &             &             \\[2mm] \hline 
50 &  19 & 9.91E-09 &  63 & 0 &  1.0000E+00 &  1.3296E-17 & -6.0127E-10 &  4.0116E-14 \\ 
   &     &          &     &   &  1.0000E+00 &             &  4.0573E-10 &             \\ 
   &     &          &     &   &  1.0000E+00 &             & -6.9435E-10 &             \\ 
   &     &          &     &   &  1.0000E+00 &             &             &             \\ 
   &     &          &     &   &  1.0000E+00 &             &             &             \\[2mm] \hline 
51 &  13 & 7.98E-09 &  40 & 0 &  1.0000E+00 &  4.1029E-18 &  1.5427E-09 &  1.8011E-13 \\ 
   &     &          &     &   &  1.0000E+00 &             & -4.4049E-10 &             \\ 
   &     &          &     &   &  1.0000E+00 &             &  1.8594E-09 &             \\ 
   &     &          &     &   &  1.0000E+00 &             &             &             \\ 
   &     &          &     &   &  1.0000E+00 &             &             &             \\[2mm] \hline 
52 &  10 & 8.71E-09 &  32 & 0 & -9.4556E-02 &  5.3266E+00 &  3.2779E+00 &  2.5459E-11 \\ 
   &     &          &     &   &  3.1519E-02 &             &  2.9054E+00 &             \\ 
   &     &          &     &   &  5.1576E-01 &             & -7.7479E+00 &             \\ 
   &     &          &     &   & -4.5272E-01 &             &             &             \\ 
   &     &          &     &   &  3.1519E-02 &             &             &             \\[2mm] \hline 
56 & 100 & 5.80E+98 & 237 & 1 & -3.0520E+19 & -2.2250E+59 & -5.0000E+03 &  1.2964E+20 \\ 
   &     &          &     &   & -7.2904E+19 &             & -5.0000E+03 &             \\ 
   &     &          &     &   &  1.0000E+20 &             &  5.0000E+03 &             \\ 
   &     &          &     &   & -1.0000E+20 &             &  5.0000E+03 &             \\ 
   &     &          &     &   &  1.0000E+20 &             &             &             \\ 
   &     &          &     &   & -1.0000E+20 &             &             &             \\ 
   &     &          &     &   & -1.0000E+20 &             &             &             \\[2mm] \hline 
61 &   8 & 9.38E-09 &  37 & 0 &  5.3268E+00 & -1.4365E+02 & -8.8768E-01 &  3.3364E-12 \\ 
   &     &          &     &   & -2.1190E+00 &             & -1.7378E+00 &             \\ 
   &     &          &     &   &  3.2105E+00 &             &             &             \\[2mm] \hline 
77 &   8 & 9.97E-09 &  51 & 0 &  1.1662E+00 &  2.4151E-01 & -8.5540E-02 &  6.9963E-12 \\ 
   &     &          &     &   &  1.1821E+00 &             & -3.1878E-02 &             \\ 
   &     &          &     &   &  1.3803E+00 &             &             &             \\ 
   &     &          &     &   &  1.5060E+00 &             &             &             \\ 
   &     &          &     &   &  6.1092E-01 &             &             &             \\[2mm] \hline 
78 &  11 & 2.19E-09 &  36 & 0 & -1.7171E+00 & -2.9197E+00 &  7.4445E-01 &  5.9653E-13 \\ 
   &     &          &     &   &  1.5957E+00 &             & -7.0358E-01 &             \\ 
   &     &          &     &   &  1.8272E+00 &             &  9.6806E-02 &             \\ 
   &     &          &     &   & -7.6364E-01 &             &             &             \\ 
   &     &          &     &   & -7.6364E-01 &             &             &             \\[2mm] \hline 
79 & 100 & 2.24E+20 & 330 & 1 & -3.2497E+01 &  6.0361E+08 & -5.0000E+03 &  3.2945E+06 \\ 
   &     &          &     &   & -5.3327E+01 &             &  5.0000E+03 &             \\ 
   &     &          &     &   &  1.4875E+02 &             &  5.0000E+03 &             \\ 
   &     &          &     &   &  1.7034E+02 &             &             &             \\ 
   &     &          &     &   &  1.3614E+01 &             &             &             \\[2mm] \hline 
501 &   6 & 1.03E-09 &  21 & 0 &  1.0000E+00 & -1.5000E+00 &  5.5556E-01 &  5.1683E-12 \\[2mm] \hline 
502 &   6 & 1.11E-11 &  21 & 0 &  6.3728E-15 &  2.0307E-29 & -7.5249E-15 &  6.3728E-15 \\[2mm] \hline 
503 &   6 & 1.60E-09 &  21 & 0 & -5.3100E-12 &  5.6392E-23 &  7.3257E-10 &  1.0620E-11 \\ 
    &     &          &     &   & -5.3100E-12 &             &             &             \\[2mm] \hline 
504 &   8 & 5.24E-09 &  40 & 0 &  2.0000E+00 &  9.0000E+00 & -1.0000E+00 &  4.2739E-12 \\[2mm] \hline 
505 &   7 & 6.66E-09 &  33 & 0 &  1.0000E+00 &  7.5269E-14 &  3.9434E-10 &  6.0957E-12 \\ 
    &     &          &     &   &  4.2221E-05 &             &             &             \\ 
    &     &          &     &   & -1.8313E-09 &             &             &             \\[2mm] \hline 
506 &   6 & 7.39E-09 &  22 & 0 &  7.0711E-01 &  1.4142E+00 & -7.0711E-01 &  1.0221E-11 \\ 
    &     &          &     &   &  7.0711E-01 &             &             &             \\[2mm] \hline 
507 &   6 & 5.76E-09 &  20 & 0 & -1.0000E+00 & -1.0000E+00 & -5.0000E-01 &  5.9488E-12 \\[2mm] \hline 
508 &   7 & 8.53E-09 &  51 & 0 &  1.0000E+00 &  9.4913E-20 & -1.0074E-08 &  2.2686E-12 \\ 
    &     &          &     &   &  1.0000E+00 &             &             &             \\[2mm] \hline 
509 &  11 & 9.62E-09 &  63 & 0 &  6.0000E+00 & -1.0800E+02 &  1.5000E+00 &  1.4779E-12 \\ 
    &     &          &     &   &  3.0000E+00 &             &             &             \\[2mm] \hline 
510 &   7 & 4.22E-09 &  42 & 0 & -5.3452E-01 & -3.7417E+00 &  1.8708E+00 &  9.4080E-13 \\ 
    &     &          &     &   & -8.0178E-01 &             &             &             \\ 
    &     &          &     &   & -2.6726E-01 &             &             &             \\[2mm] \hline 
511 & 100 & 4.59E-03 & 661 & 1 &  3.5683E-10 & -3.5009E-05 &  5.0000E+03 &  5.5027E-10 \\ 
    &     &          &     &   & -3.5009E-05 &             & -5.0000E+03 &             \\[2mm] \hline 
512 &   6 & 2.13E-09 &  19 & 0 & -7.0711E-01 & -9.8777E-01 &  1.1027E-01 &  1.9461E-12 \\ 
    &     &          &     &   & -7.0711E-01 &             &             &             \\[2mm] \hline 
513 &   5 & 5.38E-12 &  16 & 0 &  3.4568E-14 & -1.4280E-54 &  1.8983E-14 &  3.4568E-14 \\[2mm] \hline 
514 &   6 & 9.78E-09 &  19 & 0 &  1.0000E+00 &  5.0000E-01 & -1.0000E+00 &  5.5404E-11 \\ 
    &     &          &     &   & -1.9922E-10 &             &             &             \\[2mm] \hline 
 
\caption{Problems solved by Algorithm 3} 
\label{table:alg3} \\  
\end{longtable} 
\end{landscape}
}

\section{Conclusions} \label{sec:conclusions}
This paper presents a novel method for solving nonlinear programming problems based on the sharp augmented Lagrangian. It introduces a smoothed function to overcome the nondifferentiability of the sharp augmented Lagrangian, thereby facilitating the minimization process.

The exact algorithm presented in Section \ref{sec:exact} converges to a global solution of the primal problem. However, its practical implementation faces several challenges:
\begin{itemize}
\item Nondifferentiability: the objective function becomes nondifferentiable when the smoothing parameter $t$ approaches zero, potentially leading to numerical difficulties with optimization solvers that assume smoothness.
\item Global optimization: finding an exact global minimizer of the smoothed function is computationally expensive and can be challenging, particularly for large--scale problems.
\end{itemize}
These challenges demand relaxing the requirement for exact global minimizers.

The proposed inexact algorithms offer a more practical approach. A barrier function ensures the existence of inexact stationary points, even if the original function may have nondifferentiable points. One algorithm employs a fixed smoothing parameter, while the other uses a varying smoothing parameter. The convergence properties of both algorithms are analyzed, demonstrating that limit points of the generated sequences are either stationary points of the original problem or stationary points of a feasibility problem.

The boundedness of the penalty parameter is guaranteed under specific assumptions, including the convergence of the iterates, feasibility of the limit point, linear independence of the gradients, and satisfaction of the second--order sufficient optimality condition. Sufficient conditions are provided to ensure the boundedness of the penalty parameter, focusing on the relationship between the smoothing parameter and the constraint violation.

The proposed algorithms exhibit competitive performance compared to ALGENCAN, particularly in terms of the number of internal iterations. However, their performance varies depending on the specific problem characteristics, suggesting the need for further analysis and potential improvements. The promising results obtained in this study motivate further research to explore the potential of these algorithms for solving a wider range of problems.

\appendix

\section{Description of the problems 501--514}

\subsection*{\bf Problem 501}
\begin{tabular}{ll}
{\bf Objective function:} & $f \left( x \right) = \frac{1}{2} x^2 - 2 x .$ \\[2mm]
{\bf Constraints:} & $h \left( x \right) = x \left( x - 1 \right) \left( x + 1 \right) = 0 .$ \\[2mm]
{\bf Initial point:} & $x^0 = 2 .$ \\[2mm]
{\bf Solutions:} & $x^* = 1 , \quad \lambda^* = \frac{1}{2} .$
\end{tabular}

\subsection*{\bf Problem 502}
\begin{tabular}{ll}
{\bf Objective function:} & $f \left( x \right) = \frac{1}{2}x^2 .$ \\[2mm]
{\bf Constraints:} & $h \left( x \right) = x = 0 .$ \\[2mm]
{\bf Initial point:} & $x^0 = 10 .$ \\[2mm]
{\bf Solutions:} & $x^* = 0 , \quad \lambda^*=0 .$
\end{tabular}

\subsection*{\bf Problem 503}
\begin{tabular}{ll}
{\bf Objective function:} & $f \left( x \right) = x_1^2 + x_2^2 .$ \\[2mm]
{\bf Constraints:} & $h \left( x \right) = x_1 + x_2 = 0 .$ \\[2mm]
{\bf Initial point:} & $x^0 = \left( 3 , 3 \right) .$ \\[2mm]
{\bf Solutions:} & $x^* = \left( 0 , 0 \right) , \quad \lambda^* = 0 .$
\end{tabular}

\subsection*{\bf Problem 504}
\begin{tabular}{ll}
{\bf Objective function:} & $f \left( x \right) = \left( x^2 - 1 \right)^2 .$ \\[2mm]
{\bf Constraints:} & $h \left( x \right) = \left( x^2 - 1 \right) \left( x^2 - 4 \right) = 0 .$ \\[2mm]
{\bf Initial point:} & $x^0 = 10 .$ \\[2mm]
{\bf Solutions:} & $x^* = \pm 1 , \quad \lambda^* = 0 .$
\end{tabular}

\subsection*{\bf Problem 505}
\begin{tabular}{ll}
{\bf Objective function:} & $f \left( x \right) = x_2^3 + x_1x_3^2 .$ \\[2mm]
{\bf Constraints:} & $h \left( x \right) = x_1^2 + x_2^2 + x_3^2 - 1 = 0 .$ \\[2mm]
{\bf Initial point:} & $x^0 = \left( 1 , 1 , 1 \right) .$ \\[2mm]
{\bf Solutions:} & $x^* = \left( 0 , - 1 , 0 \right) , \quad \lambda^* = \frac{3}{2} .$
\end{tabular}

\subsection*{\bf Problem 506}
\begin{tabular}{ll}
{\bf Objective function:} & $f \left( x \right) = x_1 + x_2 .$ \\[2mm]
{\bf Constraints:} & $h \left( x \right) = x_1^2 + x_2^2 - 1 = 0 .$ \\[2mm]
{\bf Initial point:} & $x^0 = \left( 10 , 10 \right) .$ \\[2mm]
{\bf Solutions:} & $x^* = \left( - \frac{\sqrt{2}}{2} , - \frac{\sqrt{2}}{2} \right) , \quad \lambda^* = \frac{\sqrt{2}}{2} .$
\end{tabular}

\subsection*{\bf Problem 507}
\begin{tabular}{ll}
{\bf Objective function:} & $f \left( x \right) = x .$ \\[2mm]
{\bf Constraints:} & $h \left( x \right) = x^3 - x = 0 .$ \\[2mm]
{\bf Initial point:} & $x^0 = - 1.5 .$ \\[2mm]
{\bf Solutions:} & $x^* = - 1 , \quad \lambda^* = - \frac{1}{2} .$
\end{tabular}

\subsection*{\bf Problem 508}
\begin{tabular}{ll}
{\bf Objective function:} & $f \left( x \right) = 100 \left( x_2 - x_1^2 \right)^2 + \left( 1 - x_1 \right)^2 .$ \\[2mm]
{\bf Constraints:} & $h \left( x \right) = x_1 - x_2 = 0 .$ \\[2mm]
{\bf Initial point:} & $x^0 = \left( 100 , 1.2 \right) .$ \\[2mm]
{\bf Solutions:} & $x^* = \left( 1 , 1 \right) , \quad \lambda^* = 0 .$
\end{tabular}

\subsection*{\bf Problem 509}
\begin{tabular}{ll}
{\bf Objective function:} & $f \left( x \right) = - x_1^2 x_2 .$ \\[2mm]
{\bf Constraints:} & $h \left( x \right) = 4 x_1 x_2 + x_1^2 - 108 = 0 .$ \\[2mm]
{\bf Initial point:} & $x^0 = \left( 3 , 3 \right) .$ \\[2mm]
{\bf Solutions:} & $x^* = \left( 6 , 3 \right) , \quad \lambda^* = \frac{3}{2} .$
\end{tabular}

\subsection*{\bf Problem 510}
\begin{tabular}{ll}
{\bf Objective function:} & $f \left( x \right) = 2 x_1 + 3 x_2 + x_3 .$ \\[2mm]
{\bf Constraints:} & $h \left( x \right) = x_1^2 + x_2^2 + x_3^2 - 1 = 0 .$ \\[2mm]
{\bf Initial point:} & $x^0 = \left( 1 , 1 , 1 \right) .$ \\[2mm]
{\bf Solutions:} & $x^* = \frac{- \sqrt{14}}{14} \left( 2 , 3 , 1 \right) , \quad \lambda^* = \frac{\sqrt{14}}{2} .$
\end{tabular}

\subsection*{\bf Problem 511}
\begin{tabular}{ll}
{\bf Objective function:} & $f \left( x \right) = x_1 + x_2 .$ \\[2mm]
{\bf Constraints:} & $h_1 \left( x \right) = \left( x_1 - 1 \right)^2 + x_2^2 - 1 = 0 ,$ \\[2mm]
 & $h_2 \left( x \right) = \left( x_1 - 2 \right)^2 + x_2^2 - 4 = 0 .$ \\[2mm]
{\bf Initial point:} & $x^0 = \left( 1 , 1 \right) .$ \\[2mm]
{\bf Solutions:} & $x^* = \left( 0 , 0 \right)$ (the only feasible point).
\end{tabular}

\subsection*{\bf Problem 512}
\begin{tabular}{ll}
{\bf Objective function:} & $f \left( x \right) = \sin \left( x_1 + x_2 \right) .$ \\[2mm]
{\bf Constraints:} & $h \left( x \right) = x_1^2 + x_2^2 - 1 = 0 .$ \\[2mm]
{\bf Initial point:} & $x^0 = \left( 0 , 0 \right) .$ \\[2mm]
{\bf Solutions:} & $x^* = \left( - \frac{\sqrt{2}}{2} , - \frac{\sqrt{2}}{2} \right) , \quad \lambda^* = \frac{\sqrt{2}}{2} \cos \left( \sqrt{2} \right) .$
\end{tabular}

\subsection*{\bf Problem 513}
\begin{tabular}{ll}
{\bf Objective function:} & $f \left( x \right) = - x^4 .$ \\[2mm]
{\bf Constraints:} & $h \left( x \right) = x = 0 .$ \\[2mm]
{\bf Initial point:} & $x^0 = 1 .$ \\[2mm]
{\bf Solutions:} & $x^* = 0 , \quad \lambda^* = 0 .$
\end{tabular}

\subsection*{\bf Problem 514}
%$\scriptstyle 514$ & $\scriptstyle \frac{1}{2}(x_1^2 + x_2^2)$ & $\scriptstyle x_1-1$ & $\scriptstyle (4.9,0.1)$ & $\scriptstyle x^*=(1,0)$ & \\ \hline
\begin{tabular}{ll}
{\bf Objective function:} & $f \left( x \right) = \frac{1}{2} \left( x_1^2 + x_2^2 \right) .$ \\[2mm]
{\bf Constraints:} & $h \left( x \right) = x_1 - 1 = 0 .$ \\[2mm]
{\bf Initial point:} & $x^0 = \left( 4.9 , 0.1 \right) .$ \\[2mm]
{\bf Solutions:} & $x^* = \left( 1 , 0 \right) , \quad \lambda^* = - 1 .$
\end{tabular}

\begin{acknowledgements}
This work was partially supported by the following grants from SGCyT--UNNE (19F010), ANPCyT (PICT-2021-GRF-TI-00188, PICT-2019-2019-04569) and SeCYT--UNC (33620230100671CB).
\end{acknowledgements}

%References
%\printbibliography
\bibliographystyle{plain}%{abbrv}%{spmpsci}
\bibliography{referencias}

\end{document}